\def\cpoly{{\rm Cut}^\square(G)}
\def\cpolytope{{\rm Cut}^\square}
\newtheorem{Theorem}{Theorem}[section]
\newtheorem{Lemma}[Theorem]{Lemma}
\newtheorem{Proposition}[Theorem]{Proposition}
\theoremstyle{definition}
\newtheorem{Remark}[Theorem]{Remark}
\newtheorem{Example}[Theorem]{Example}
\newtheorem{Conjecture}[Theorem]{Conjecture}
\def\RR{{\mathbb R}}
\def\ZZ{{\mathbb Z}}
\def\NN{{\mathbb N}}
\def\PP{{\mathcal P}}
\journal{arXiv}
\begin{document}

\begin{frontmatter}

\title{Gorenstein cut polytopes}
\author{Hidefumi Ohsugi}

\address{
Department of Mathematics,
College of Science,
Rikkyo University,
Toshima-ku, Tokyo 171-8501, Japan} 
\ead{ohsugi@rikkyo.ac.jp}

\date{}

\begin{abstract}
An integral convex polytope ${\mathcal P}$ is said to be Gorenstein if 
its toric ring $K[{\mathcal P}]$ is normal and Gorenstein.
In this paper, Gorenstein cut polytopes of graphs 
are characterized explicitly.
First, we prove that Gorenstein cut polytopes are compressed
(i.e., all of whose reverse lexicographic triangulations are unimodular).
Second, by applying Athanasiadis's theory for
Gorenstein compressed polytopes,
we show that a cut polytope of a graph $G$ is Gorenstein
if and only if $G$ has no $K_5$-minor and $G$ is either
a bipartite graph without induced cycles of length $\geq 6$ or 
a bridgeless chordal graph.
\end{abstract}

\begin{keyword}
Gorenstein polytopes, cut polytopes, compressed polytopes,
special simplices.

\MSC[2010] 
Primary 52B20; Secondary 13H10. 
\end{keyword}

\end{frontmatter}

\section*{Introduction}

Let $\PP \subset \RR^m$ be an integral convex polytope
of dimension $m$.
Let $K$ be a field and let
$K[{\bf x},{\bf x}^{- 1},t] = K[x_1,x_1^{-1},\ldots,x_m,x_m^{- 1},t]$ denote
the Laurent polynomial ring in $m+1$ variables over $K$.
The {\em toric ring} of the polytope $\PP$ is the subalgebra
$K[\PP]$ of $K[{\bf x},{\bf x}^{- 1},t]$ generated by those monomials
${\bf x}^{\bf a} t = x_1^{a_1} \cdots x_m^{a_m} t$ such that
${\bf a} =(a_1,\ldots,a_m) \in \PP \cap \ZZ^m$.
The defining ideal $I_\PP$ of $K[\PP]$ is called the {\em toric ideal} of $\PP$.
We regard $K[\PP]$ as
a homogeneous algebra
by setting each $\deg {\bf x}^{\bf a} t = 1$.
Let $H(K[\PP], r) = \dim_K K[\PP]_r$
where
$K[\PP]_r $ is the vector space spanned by the monomials of 
$K[\PP]$ of degree $r$.
Then, $H(K[\PP], r)$ is called the {\em Hilbert function} of $K[\PP]$
and 
$F(K[\PP], \lambda)=\sum_{r=0}^\infty H(K[\PP], r) \lambda^r$
is called the {\em Hilbert series} of $K[\PP]$.
It is known that we have
$F(K[\PP], \lambda) = 
\frac{h_0 + h_1 \lambda + \cdots + h_s \lambda^s }{(1 - \lambda )^{m+1}},$
where each $h_i \in \ZZ$ with $h_s \neq 0$.
The sequence
$(h_0, h_1, \ldots,h_s)$ is called the {\em $h$-vector} of 
$K[\PP]$.
An integral convex polytope $\PP$ is said to be {\em normal}
if $K[\PP]$ is a normal semigroup ring.
If $\PP$ is normal, then $K[\PP]$ is Cohen--Macaulay.
It is known \cite[Theorem 13.11]{Stu} 
 that $\PP$ is normal if and only if
the Hilbert function $H(K[\PP], r)$ is equal to
the {\em normalized Ehrhart polynomial}
$
E(\PP,r) = | r \PP \cap {\mathcal L}_\PP |
$
where $r \PP = \{r \alpha \ | \ \alpha \in \PP\}$
and ${\mathcal L}_\PP$ is a sublattice of $\ZZ^m$ spanned by $\PP \cap \ZZ^m$.
If $\PP$ is normal, then
the $h$-vector of $K[\PP]$ is nonnegative (i.e., $h_i \geq 0$ for all $0 \leq i \leq s$).
Moreover, if $\PP$ is normal, then
$K[\PP]$ is Gorenstein if and only if 
$h$-vector of $K[\PP]$ is symmetric
(i.e., $h_i = h_{s-i}$ for all $0 \leq i \leq s$).
An integral convex polytope $\PP$ is said to be {\it Gorenstein} if $K[\PP]$ is 
normal and Gorenstein (See, e.g., \cite{BrGu}).

In this paper, Gorenstein cut polytopes of graphs 
are characterized explicitly.
Let $G =(V(G),E(G))$ be a finite (undirected) graph on the vertex set 
$V(G) = [n]=\{1,2,\ldots,n\}$
and the edge set $E(G)=\{e_1,\ldots,e_m\}$.
We assume that $G$ has no loops and no multiple edges.
Given $S \subset [n]$,
the {\em cut semimetric} on $G$ induced by $S$ is
the $0/1$ vector
$\delta_G(S) = ( d_{i j} \ | \ \{i,j\} \in E(G)) \in \RR^m$ where
$$
d_{i j} :=
\left\{
\begin{array}{cc}
 1 & \mbox{ if } | S \cap \{i,j\}|=1,\\
 0 & \mbox{ otherwise,}
\end{array}
\right.
$$
for each $\{i,j \} \in E(G)$.
In particular, we have $\delta_G(\emptyset) = (0,\ldots,0) \in \RR^{m}$.
The {\em cut polytope} $\cpoly$ of $G$ is the convex hull of 
$ \{\delta_G (S) \ | \ S \subset [n]\} \subset \RR^m$.
If $S' =[n] \setminus S$, then we have $\delta_G(S) = \delta_G(S')$.
It then follows that $\cpoly$ has $2^{n-1}$ vertices.
It is known that the dimension of $\cpoly$ equals to $m$.
As explained in \cite{DeLa}, cut polytopes are well-known and
important objects in
discrete mathematics (graph theory, combinatorial optimization, etc.).

Sturmfels--Sullivant \cite{StSu} studied the toric rings of cut polytopes
and their applications to algebraic statistics.
Especially, they showed that the clique sum of two graphs $G_1$ and $G_2$ 
(i.e., the graph obtained by identifying a common clique of $G_1$ and $G_2$) yields
a toric fiber product \cite{Sul2} of the toric ideals of cut polytopes
$\cpolytope (G_1) $ and $\cpolytope (G_2) $.
They also gave several interesting conjectures on
properties of the toric ideals of cut polytopes.
Inspired by them, several results are known on toric ideals and toric rings
of cut polytopes.
See, e.g., \cite{Eng, KNP, NaPe, OhsCut, Sul3}.
However, in \cite{StSu}, they said that
``We do not have a firm conjecture on the structure of those graphs whose cut ideal is Gorenstein."
One of the reasons for this difficulty is that
Gorensteiness is {\em not} preserved under
taking the clique sum of graphs.
It is known \cite{NaPe} that, if $G$ is a tree, then $\cpoly$ is Gorenstein.
However, no other result seems to be known for Gorenstein cut polytopes.

The content of this paper is as follows.
In Section 1, together with graph theoretical terminology,
we summarize known results on the toric ring
of a cut polytope.
In Section 2, by using Barahona--Mahjoub's formula \cite{BaMa}
on the facets of normal cut polytopes
and Sullivant's characterization \cite{Sul} of compressed cut polytopes,
we prove that Gorenstein cut polytopes are compressed
(i.e., all of whose reverse lexicographic triangulations are unimodular).
In Section 3, 
by using Athanasiadis's theory of special simplices for
Gorenstein compressed polytopes,
we prove that a cut polytope of a graph $G$ is Gorenstein
if and only if $G$ has no $K_5$-minor and $G$ is either
a bipartite graph without induced cycles of length $\geq 6$ or 
a bridgeless chordal graph (Theorem \ref{main}).
%
%
%
%

\section{Normal cut polytopes}

In this section, we summarize known results on 
(i) normal cut polytopes, (ii) compressed cut polytopes
and (iii) Gorenstein cut polytopes.

First, we introduce graph theoretical terminology.
Let $G$ be a graph with the vertex set $V(G) = [n] = \{1,2,\ldots,n\}$
and the edge set $E(G)$.
We assume that $G$ has no loops and no multiple edges.
A {\em cycle} of length $q$ $(\geq 3)$ of $G$ is 
a finite sequence of the form
\begin{equation}
\label{defofcycle}
\Gamma =
 (
\{v_1,v_2\},
\{v_2,v_3\},
\ldots,
\{v_{q},v_1\}
)
\end{equation}
with each $\{v_k,v_{k+1}\}$, $1 \leq k \leq q-1$ and $\{v_q,v_1\}$ belong to $E(G)$
and $v_i \neq v_j$ for all $1 \leq i < j \leq q$.
An {\em even} (resp.~{\em odd}) {\em cycle} is a cycle of even (resp.~odd) length.
A {\em triangle} is a cycle of length 3.
A {\em chord} of a cycle (\ref{defofcycle}) is an edge $e \in E(G)$ of the form
$e = \{v_i,v_j\}$ for some $1 \leq i < j \leq q$ with $e \notin C$.
An {\em induced} cycle of $G$ is a cycle having no chord.
A graph $G$ is said to be {\em chordal} if 
$G$ has no induced cycle of length $\geq 4$.
An edge $e$ of $G$ is called a {\em bridge} if there exists no cycle of $G$
containing $e$.
It is easy to see that a chordal graph $G$ has no bridges if and only if
$E(G)$ is the union of all triangles of $G$.
A graph $G$ is called a {\em bipartite graph} if
there exists a bipartition $V(G) = V_1 \cup V_2$
such that any edge of $G$ connects a vertex of $V_1$
and a vertex of $V_2$.
It is known that $G$ is bipartite if and only if $G$ has no odd cycle.
Let $e =\{i,j\} \in E(G)$ be an edge of $G$.
Then, the new graph $G \setminus e$ on the vertex set $V(G)$
and the edge set $E(G)\setminus\{e\}$
is called
the graph obtained from $G$ by {\em deleting} the edge $e$.
On the other hand, the new graph $G / e$ obtained by 
the procedure
(i)
Identify the vertices $i$ and $j$;
(ii)
Delete the multiple edges that may be created while (i);
is called the graph obtained from $G$ by {\em contracting} the edge $e$.
A graph $H$ is said to be a {\em minor} of $G$
if it can be obtained from $G$ by a sequence of deletions and/or contractions
of edges (and deletions of vertices).
Let $K_n$ denote the complete graph with $n$ vertices.
It is known that

\begin{Proposition}[\cite{StSu}]
\label{normalityK5}
Let $G$ be a graph.
If $\cpoly$ is normal, then $G$ has no $K_5$-minor.
\end{Proposition}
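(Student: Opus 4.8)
The plan is to prove the contrapositive by showing that normality of the cut polytope is monotone under taking minors, and then invoking the single base case that $\cpolytope(K_5)$ fails to be normal. Thus it suffices to establish two facts: (i) if $\cpoly$ is normal then $\cpolytope(G \setminus e)$ and $\cpolytope(G/e)$ are normal for every edge $e$; and (ii) $\cpolytope(K_5)$ is not normal. Granting (i) and (ii), any graph $G$ carrying a $K_5$-minor would, after a sequence of edge deletions and contractions (deletions of isolated vertices leave $\cpoly$ unchanged), produce $\cpolytope(K_5)$ from a normal polytope, forcing $\cpolytope(K_5)$ to be normal and contradicting (ii). Hence a normal $\cpoly$ admits no $K_5$-minor.

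For the contraction step I would identify $\cpolytope(G/e)$ with a face of $\cpoly$. Writing $e = \{i,j\}$, a vertex $\delta_G(S)$ lies on the supporting hyperplane $\{x_e = 0\}$ precisely when $i$ and $j$ lie on the same side of the partition induced by $S$; such subsets $S$ correspond exactly to the cuts of $G/e$ after identifying $i$ with $j$, and the edge coordinates match once the duplicate coordinates created by contraction are collapsed (if $\{i,k\},\{j,k\} \in E(G)$ then $d_{ik}(S) = d_{jk}(S)$ whenever $i,j$ are unseparated). Therefore $\cpolytope(G/e)$ is, up to forgetting the constant coordinate $x_e$ and identifying duplicates, the face $\cpoly \cap \{x_e = 0\}$. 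Since every face of a normal polytope is normal, this step is immediate.

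The deletion step is the main obstacle. Here $\cpolytope(G \setminus e)$ is the image $\pi(\cpoly)$ of the coordinate projection $\pi$ forgetting the $e$-th coordinate, and coordinate projections do \emph{not} preserve normality in general. The key is to exploit the $0/1$ structure of cut vectors: given a lattice point $w \in r\,\cpolytope(G\setminus e) \cap \ZZ^{m-1}$, I would lift it to a lattice point $(w,c) \in r\cpoly \cap \ZZ^{m}$ by showing that the fibre $\{c : (w,c) \in r\cpoly\}$ over $w$ contains an integer. Once such a lift exists, normality of $\cpoly$ expresses $(w,c)$ as a sum of $r$ cut vectors of $G$, and projecting back writes $w$ as a sum of $r$ cut vectors of $G\setminus e$, yielding normality of $\cpolytope(G\setminus e)$. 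Establishing the integral lift is the delicate point: because the $e$-coordinate of every vertex of $\cpoly$ equals $0$ or $1$, I expect the fibre interval to be controlled by the switching symmetries of the cut polytope together with the explicit facet inequalities, and this is precisely where the argument must use cut-specific structure rather than a general projection lemma.

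Finally, for the base case I would verify that $\cpolytope(K_5)$ is not normal by exhibiting a lattice point in the second dilation $2\,\cpolytope(K_5)$ that cannot be written as a sum of two cut vectors of $K_5$ (equivalently, a Hilbert-basis obstruction); such a witness is classical in the theory of the cut polytope of $K_5$. This finite check completes the reduction and establishes the proposition.
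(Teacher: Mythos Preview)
The paper does not give its own proof of this proposition; it is simply quoted from \cite{StSu}. So there is no in-paper argument to compare against. That said, your strategy is the natural one and is essentially how the result is understood in the literature: reduce to the single computation that $\cpolytope(K_5)$ is not normal, together with minor-closedness of normality. The latter is exactly Proposition~\ref{minorclosed} of the present paper (taken from \cite{OhsCut}), and the base case is classical (see~\cite{Lau}).

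There is, however, a genuine gap in your deletion step, and you flag it yourself. For the lift $(w,c)\in r\cpoly$ of $w\in r\,\cpolytope(G\setminus e)$ you need an \emph{integer} $c$ in the fibre interval, and moreover one for which $(w,c)$ lies in the sublattice $\mathcal{L}_\PP$ (recall that normality here is relative to the lattice generated by the cut vectors, cf.\ Proposition~\ref{za}, so parity constraints along cycles through $e$ may pin down $c\bmod 2$). Neither the $0/1$ nature of the vertices nor the switching symmetries alone guarantee that the fibre interval has length $\ge 1$, and for arbitrary $G$ you have no facet description to fall back on. The paper explicitly remarks that the proof of Proposition~\ref{minorclosed} in \cite{OhsCut} relies on Barahona--Mahjoub's facet formula (Proposition~\ref{barahona}); that formula, however, already presupposes that $G$ has no $K_5$-minor, so if you try to import Proposition~\ref{minorclosed} wholesale you must check that its proof does not itself invoke Proposition~\ref{normalityK5}, or else supply an independent argument for the deletion lift. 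As written, your proposal leaves exactly that point open.
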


It is conjectured 
that the converse of Proposition \ref{normalityK5} holds in general.

\begin{Conjecture}[\cite{StSu}]
Let $G$ be a graph.
Then, the following conditions are equivalent.
\begin{enumerate}
\item[(i)]
$\cpoly$ is normal.
\item[(ii)]
$K[\cpoly]$ is Cohen--Macaulay.
\item[(iii)]
$G$ has no $K_5$-minor.
\end{enumerate}
\end{Conjecture}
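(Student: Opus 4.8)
The three conditions are most naturally handled as a chain of implications, and it is worth isolating first what comes for free. The implication (i) $\Rightarrow$ (ii) is not special to cut polytopes at all: it is Hochster's theorem that a normal affine semigroup ring is Cohen--Macaulay, already recorded in the Introduction. The implication (i) $\Rightarrow$ (iii) is exactly Proposition \ref{normalityK5}. Thus both ``easy'' arrows out of (i) are in hand, and the entire content of the statement lies in the reverse directions: one must establish the converse (iii) $\Rightarrow$ (i) of Proposition \ref{normalityK5}, together with one implication feeding (ii) back into the equivalence, for instance (ii) $\Rightarrow$ (iii). Either of these, combined with the free arrows, closes a cycle (i) $\Rightarrow$ (ii) $\Rightarrow$ (iii) $\Rightarrow$ (i).

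For (iii) $\Rightarrow$ (i) the plan is to induct on the structure of $K_5$-minor-free graphs. By Wagner's structure theorem, every graph with no $K_5$-minor is assembled from planar graphs and the Wagner graph $V_8$ by repeatedly forming clique sums over cliques of size at most $3$. Since a clique sum $G$ of $G_1$ and $G_2$ realizes the toric ideal of $\cpoly$ as a toric fiber product of those of $\cpolytope(G_1)$ and $\cpolytope(G_2)$ \cite{StSu, Sul2}, the strategy splits into two parts: (a) verify normality for the building blocks, i.e.\ for $\cpolytope(H)$ with $H$ planar and for $\cpolytope(V_8)$, the latter being a finite computation; and (b) show that normality of cut polytopes is inherited through clique sums over cliques of size $\leq 3$, by tracking the slice and multigrading data of the toric fiber product and checking that the integer-decomposition property passes from the two factors to the product. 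For the planar base case one can hope to exploit Barahona--Mahjoub's facet description of normal cut polytopes \cite{BaMa}, valid precisely when $G$ has no $K_5$-minor, to certify that every lattice point of a dilation $r\,\cpolytope(H)$ decomposes as a sum of $r$ vertices.

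For (ii) $\Rightarrow$ (iii) the natural route is the contrapositive: if $G$ has a $K_5$-minor then $K[\cpoly]$ fails to be Cohen--Macaulay. One would begin with the base case $G = K_5$, where $\cpolytope(K_5)$ is non-normal by Proposition \ref{normalityK5} and where one checks directly that $K[\cpolytope(K_5)]$ is not Cohen--Macaulay, and then attempt to propagate this defect along the deletions and contractions that exhibit the minor, using a restriction or algebra-retract argument relating $K[\cpoly]$ to $K[\cpolytope(K_5)]$.

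The hard part will be step (b) of the first strategy: there is no general principle guaranteeing that a toric fiber product of normal toric rings is again normal, and the gluing over a triangle is exactly the situation in which the integer-decomposition property can break. Equally delicate is the non-Cohen--Macaulay propagation in (ii) $\Rightarrow$ (iii), since Cohen--Macaulayness is not in general inherited by minors or by algebra retracts. These two obstacles are precisely why Sturmfels--Sullivant left this as a conjecture: a complete proof would seem to require either a uniform normality criterion surviving the clique-sum construction, or a direct combinatorial model for the lattice points of $r\,\cpoly$ when $G$ has no $K_5$-minor.
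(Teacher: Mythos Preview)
This statement is a \emph{conjecture}, and the paper says so explicitly (``Although the conjecture is still open''). There is no proof in the paper to compare against, and your write-up correctly ends by acknowledging that the hard implications remain open. So as a proof attempt this cannot succeed, and you recognize that.

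That said, your diagnosis of \emph{where} the difficulty lies is off. You flag step~(b), preservation of normality under clique sums over cliques of size at most~$3$, as ``the hard part'' and say ``the gluing over a triangle is exactly the situation in which the integer-decomposition property can break.'' But in the paper's conventions a $k$-sum has $|V_1\cap V_2|=k+1$, so sums over cliques of size $\leq 3$ are exactly the $0$-, $1$-, and $2$-sums, and Proposition~\ref{csum} already establishes that normality of cut polytopes is preserved under these. So your step~(b) is not the obstruction; it is already a theorem in the paper. Combined with Wagner's decomposition, this reduces (iii)~$\Rightarrow$~(i) entirely to the base cases: normality of $\cpolytope(V_8)$ (a finite check) and, crucially, normality of $\cpolytope(H)$ for \emph{every planar graph} $H$. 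It is this planar base case that is open, not the gluing step. The Barahona--Mahjoub facet description (Proposition~\ref{barahona}) is available here, but no one has succeeded in turning it into a verification of the integer-decomposition property for arbitrary planar $G$.

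For (ii)~$\Rightarrow$~(iii), your contrapositive strategy is reasonable in outline, but the propagation step is exactly as fragile as you note: failure of Cohen--Macaulayness does not in general pass from $K[\cpolytope(K_5)]$ up to $K[\cpoly]$ along a minor relation, and the paper offers no tool for this. So this direction remains genuinely open as well.
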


Although the conjecture is still open, it is known that

\begin{Proposition}[\cite{OhsCut}]
\label{minorclosed}
Let $H$ be a minor of a graph $G$.
If $\cpoly$ is normal, then so is $\cpolytope (H)$.
\end{Proposition}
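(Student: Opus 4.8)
The plan is to reduce the statement to the two elementary minor operations and to treat each separately. Since $H$ is a minor of $G$, it is obtained from $G$ by a finite sequence of edge deletions, edge contractions, and vertex deletions; deleting an isolated vertex leaves $E(G)$ unchanged and hence does not affect the cut polytope, while deleting a non-isolated vertex amounts to first deleting all of its incident edges and then an isolated vertex. Thus, by induction on the length of such a sequence, it suffices to prove that normality of $\cpoly$ is inherited by $\cpolytope(G\setminus e)$ and by $\cpolytope(G/e)$ for a single edge $e = \{i,j\} \in E(G)$.

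For contraction I would show that $\cpolytope(G/e)$ is, up to a lattice isomorphism, a face of $\cpoly$. Since every cut vector satisfies $d_e \geq 0$, the set $F = \cpoly \cap \{d_e = 0\}$ is a face, whose vertices are exactly the $\delta_G(S)$ with $i$ and $j$ on the same side of the cut. On $F$ the coordinates $d_{ik}$ and $d_{jk}$ attached to a common neighbour $k$ of $i$ and $j$ coincide (both record whether $k$ is separated from $\{i,j\}$), so the linear map that forgets $d_e$ and identifies such pairs of coordinates carries $F$ bijectively, and lattice-isomorphically, onto $\cpolytope(G/e)$. It then remains to invoke the standard fact that a face of a normal lattice polytope is itself normal: if a lattice point of $rF$ is written as a sum of $r$ lattice points of $\cpoly$, then, because each summand satisfies $d_e \geq 0$ while the sum lies on $\{d_e = 0\}$, every summand already lies on $F$.

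The deletion case is where the real work lies. Forgetting the coordinate $d_e$ defines a lattice projection $\pi_e$ with $\pi_e(\delta_G(S)) = \delta_{G\setminus e}(S)$, so $\cpolytope(G\setminus e) = \pi_e(\cpoly)$ and, on the level of affine semigroups, the semigroup of $G\setminus e$ is the image of that of $G$. Since projections do not preserve normality in general, I would argue by \emph{lifting}: given $r \geq 1$ and a lattice point $\mathbf{b} \in r\,\cpolytope(G\setminus e)$, I want to produce a lattice point $\tilde{\mathbf{b}} \in r\,\cpoly$ with $\pi_e(\tilde{\mathbf{b}}) = \mathbf{b}$; normality of $\cpoly$ then expresses $\tilde{\mathbf{b}}$ as a sum of $r$ cut vectors of $G$, and applying $\pi_e$ exhibits $\mathbf{b}$ as a sum of $r$ cut vectors of $G\setminus e$. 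The fibre of $\pi_e$ over $\mathbf{b}$ inside $r\,\cpoly$ is a segment in the $d_e$-direction whose endpoints are cut out by the inequalities of $\cpoly$ that involve $e$.

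The main obstacle is therefore to show that this fibre segment always contains a lattice point of the cut lattice of $G$ at an admissible (integral, correct-parity) height. This is where I expect to use Barahona--Mahjoub's description of the facets of a normal cut polytope: because $\cpoly$ is normal, $G$ has no $K_5$-minor by Proposition \ref{normalityK5}, hence neither does $G\setminus e$, so the only inequalities constraining $d_e$ are the bounds $0 \leq d_e \leq 1$ together with the cycle inequalities supported on induced cycles through $e$. These have integer coefficients, so the endpoints of the fibre segment are integers determined by $\mathbf{b}$; the remaining, and most delicate, step is to check that the segment is long enough, and compatible with the parity conditions defining the cut lattice, to contain such a lifting $\tilde{\mathbf{b}}$. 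Establishing this fibre-lifting property is the crux of the argument, after which the reduction above closes the proof.
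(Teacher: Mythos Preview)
The paper does not actually prove Proposition~\ref{minorclosed}; it is quoted from \cite{OhsCut}, and the only information given here about its proof is the remark that Barahona--Mahjoub's facet description (Proposition~\ref{barahona}) ``plays an important role.'' Your plan is consistent with that hint: you reduce to a single deletion or contraction, handle contraction by recognising $\cpolytope(G/e)$ as (lattice-isomorphic to) a face of $\cpoly$, and for deletion you set up a fibre-lifting argument whose analysis rests precisely on the Barahona--Mahjoub inequalities. So at the level of strategy you are aiming at the right target with the right tool.

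That said, what you have written is a plan rather than a proof. The contraction part is essentially complete, modulo the routine verification that your projection is a genuine lattice isomorphism between the face $\{d_e=0\}$ and $\cpolytope(G/e)$ (in particular, that every point of $r\,\cpolytope(G/e)\cap\mathcal{L}_{G/e}$ lifts to a point of the face lying in $\mathcal{L}_G$). The deletion part, however, you explicitly leave open: you correctly observe that the fibre over $\mathbf{b}$ is an interval with integer endpoints, and that Proposition~\ref{za} forces a parity on $d_e$ (and one should check, via the fact that the symmetric difference of two cycles through $e$ is a sum of cycles of $G\setminus e$, that this parity is well-defined). But you stop short of proving that an integer of the required parity actually lies in the interval, and this is precisely the nontrivial step---it is not automatic, since an interval $[L,U]$ with $L=U$ an integer of the wrong parity would defeat you. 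Without that argument the proof is incomplete. Since the present paper gives no further details, I cannot compare your execution against the original beyond confirming that the outline matches the indicated approach.
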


In order to prove Proposition \ref{minorclosed},
the following Barahona--Mahjoub's formula \cite{BaMa}
plays an important role.

\begin{Proposition}
\label{barahona}
Let $G$ be a graph without $K_5$-minor.
Then, $\cpoly $ is the solution set of the following linear inequalities:
\begin{eqnarray*}
0 \leq x_i \leq 1 \hspace{1.5cm}
& & (e_i \mbox{ does not belong to any triangle of } G.)\\
\sum_{e_i \in F} x_i - \sum_{e_j \in C \setminus F} x_j \leq |F| -1
& & \ \ 
\left(
\begin{array}{c}
C \mbox{ is an induced cycle of } G,\\
F \mbox{ is an odd subset of } C.
\end{array}
\right)
\end{eqnarray*}
Moreover, each of them defines a facet of $\cpoly$.
\end{Proposition}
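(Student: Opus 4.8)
This is the theorem of Barahona and Mahjoub \cite{BaMa}; I would prove it in four stages --- validity, reduction of the inequality list, completeness, and facet-definingness --- and I expect completeness to be the crux. \emph{Validity} rests on the elementary fact that a cut meets any cycle in an even number of edges: if $\delta_G(S)$ is a cut vector and $C$ is a cycle, then traversing $C$ one crosses the partition $(S,[n]\setminus S)$ an even number of times, so $\sum_{e\in C}\bigl(\delta_G(S)\bigr)_e$ is even. Fix an induced cycle $C$ and an odd subset $F\subseteq C$, and put $p=\sum_{e\in F}\bigl(\delta_G(S)\bigr)_e$ and $q=\sum_{e\in C\setminus F}\bigl(\delta_G(S)\bigr)_e$, so that $p+q$ is even. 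If $p\le|F|-1$ then $p-q\le|F|-1$; and if $p=|F|$ then $q=(p+q)-|F|$ is odd, hence $q\ge1$ and again $p-q\le|F|-1$. Thus every cut vector, and therefore $\cpoly$, satisfies all the cycle inequalities, while the box inequalities $0\le x_i\le1$ are immediate on $0/1$ vectors.

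Next I would justify the precise list. A cycle inequality for a cycle $C$ with a chord $e$ is the sum of the two cycle inequalities attached to the two shorter cycles into which $e$ splits $C$ (putting $e$ into the odd subset of whichever piece needs it to make both subsets odd, so that the coefficients of $x_e$ cancel and the right-hand sides add to $|F|-1$); iterating, the induced-cycle inequalities generate all cycle inequalities, so only induced cycles need to be listed. Likewise, if $e_i$ lies in a triangle $\{e_i,e_j,e_k\}$, then summing the triangle inequalities $x_i-x_j-x_k\le0$ and $x_i+x_j+x_k\le2$ yields $x_i\le1$, and summing $x_j-x_i-x_k\le0$ with $x_k-x_i-x_j\le0$ yields $x_i\ge0$; hence the box inequalities are redundant exactly for edges in triangles, which is why they appear only for the other edges.

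\emph{Completeness}, the reverse inclusion, is the hard part: when $G$ has no $K_5$-minor, the polyhedron cut out by the listed inequalities must already equal $\cpoly$. Here I would use the structural description of $K_5$-minor-free graphs --- by Wagner's theorem every such graph is assembled from planar graphs and the Wagner graph $V_8$ by clique sums along cliques of size at most $3$ --- and run an induction along this decomposition, using that the facet structure of cut polytopes is compatible with clique sums (the amalgamation property, which is the polyhedral shadow of the toric fiber product recorded in the introduction). The base cases are the planar graphs, for which cycle inequalities suffice by the planar duality between cuts and $T$-joins, and the single graph $V_8$, which is checked directly. This is precisely the step that consumes the no-$K_5$-minor hypothesis, since $\cpolytope(K_5)$ itself has facets that are not cycle inequalities, and it is the main obstacle; a more combinatorial alternative is to show that any facet-defining inequality outside the stated list, once normalized by the switching symmetry of the cut polytope, forces a $K_5$-minor in $G$.

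Finally, for \emph{facet-definingness} I would exhibit, for each listed inequality, $m$ affinely independent cut vectors on its supporting hyperplane, which is enough because $\dim\cpoly=m$. For the cycle inequality of an induced cycle $C$ with odd $F$, equality holds exactly for those cuts whose restriction to $C$ cuts all of $F$ and a single edge of $C\setminus F$, or all but one edge of $F$ and no edge of $C\setminus F$; there are enough such patterns, and because $C$ is chordless the values on edges outside $C$ can be varied freely, producing the required independent family. For a box inequality on an edge $e_i$ lying in no triangle, the analogous construction supplies the independent cuts, completing the proof.
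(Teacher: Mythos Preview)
The paper does not prove this proposition: it is quoted verbatim from Barahona--Mahjoub \cite{BaMa} and used as a black box throughout. So there is no ``paper's own proof'' to compare against; your write-up is a proof sketch of a cited external result.

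That said, your outline tracks the actual Barahona--Mahjoub argument closely. The validity and redundancy reductions are correct as written (your chord-splitting bookkeeping and the triangle-implies-box derivation both check out). For completeness you correctly locate the real content: Wagner's decomposition of $K_5$-minor-free graphs into planar pieces and copies of $V_8$ via $(\le 3)$-clique-sums, Seymour's result that cycle inequalities suffice for planar graphs, a direct verification for $V_8$, and the preservation of the description under clique sums. This is exactly the route taken in \cite{BaMa}, so your plan is not merely plausible but historically accurate. The facet-definingness sketch is also on the right track---your enumeration of the equality cases $(p,q)\in\{(|F|,1),(|F|-1,0)\}$ is correct---though the phrase ``values on edges outside $C$ can be varied freely'' would need to be cashed out in terms of vertex subsets rather than edge vectors, since cut vectors are not arbitrary $0/1$ points; this is routine but should be said carefully in a full proof.
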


Let $G_1 = (V_1, E_1)$ and 
$G_2 = (V_2, E_2)$ be graphs such that $V_1 \cap V_2$ is a clique of both graphs.
The new graph $G = G_1 \sharp G_2$ with the vertex set $V = V_1 \cup V_2$
and edge set $E=E_1 \cup E_2$ is called the {\em clique sum} of $G_1$ and $G_2$
along $V_1 \cap V_2$.
If the cardinality of $V_1 \cap V_2$ is $k+1$, this operation is called a {\em $k$-sum} of the graphs.
The normality of cut polytopes are preserved under $0$, $1$, $2$-sum.

\begin{Proposition}[\cite{OhsCut}]
\label{csum}
Let $G = G_1 \sharp G_2$ be a $0$, $1$ or $2$-sum of $G_1$ and $G_2$.
Then,  $\cpoly$ is normal if and only if
both $\cpolytope (G_1)$ and $\cpolytope (G_2)$ are normal.
\end{Proposition}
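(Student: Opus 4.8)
The plan is to prove the two implications separately, the forward (``only if'') direction being immediate and the converse carrying all of the work. For ``only if,'' I would observe that each $G_i$ is obtained from $G = G_1 \sharp G_2$ by deleting the vertices in $V(G) \setminus V_i$; since $V_1 \cap V_2$ is a clique of both summands, the only edges of the opposite summand lying inside $V_i$ are edges of the shared clique, which already belong to $G_i$. Hence this vertex deletion recovers exactly $G_i$, so $G_1$ and $G_2$ are minors of $G$, and Proposition~\ref{minorclosed} shows that normality of $\cpoly$ forces normality of $\cpolytope(G_1)$ and $\cpolytope(G_2)$.

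For the converse I would verify normality of $\cpoly$ via the integer decomposition property: given $r \geq 1$ and a lattice point $\beta \in r\cpoly \cap \ZZ^m$, the goal is to write $\beta = \sum_{k=1}^r \delta_G(S_k)$. Split $\beta = (\beta', \beta'')$ according to $E(G) = E_1 \cup E_2$, where the single coordinate common to $\beta'$ and $\beta''$ (present only in the $2$-sum, corresponding to the shared edge $\{u,v\}$) carries the same value in both. The first key step is to show $\beta' \in r\cpolytope(G_1) \cap \ZZ^{E_1}$ and $\beta'' \in r\cpolytope(G_2) \cap \ZZ^{E_2}$. Since $\cpolytope(G_1)$ and $\cpolytope(G_2)$ are normal, Proposition~\ref{normalityK5} gives that $G_1$ and $G_2$ have no $K_5$-minor, so their cut polytopes are cut out by the inequalities of Proposition~\ref{barahona}. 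Because $V_1 \cap V_2$ is a clique lying in both summands, the clique-sum introduces no new chords on either summand, so every induced cycle of $G_i$ remains an induced cycle of $G$. Consequently each defining inequality of $r\cpolytope(G_i)$ is a scaled valid inequality of $r\cpoly$ involving only the $E_i$-coordinates, hence is satisfied by $\beta$; the coordinatewise bounds hold since cut vectors are $0/1$. This yields the two memberships.

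Using normality of the summands I then decompose $\beta' = \sum_{k=1}^r \delta_{G_1}(S_k')$ and $\beta'' = \sum_{k=1}^r \delta_{G_2}(S_k'')$, and glue these cuts along the shared clique. In the $0$- and $1$-sum there is no common edge, so after replacing some $S_k''$ by its complement (using $\delta_{G_2}(S) = \delta_{G_2}(\overline S)$) one may assume $S_k'$ and $S_k''$ agree on $V_1 \cap V_2$, whereupon $S_k = S_k' \cup S_k''$ restricts on $E_1$ to $\delta_{G_1}(S_k')$ and on $E_2$ to $\delta_{G_2}(S_k'')$. In the $2$-sum the shared-edge coordinate of $\beta'$ equals the number of the $G_1$-cuts that separate $u$ and $v$, and the shared-edge coordinate of $\beta''$ equals the number of $G_2$-cuts that separate $u$ and $v$; as these are the same coordinate of $\beta$, the two counts coincide. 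I would therefore match the separating $G_1$-cuts with the separating $G_2$-cuts and the non-separating with the non-separating, and within each matched pair use complementation to force agreement on $\{u,v\}$. The glued sets $S_k = S_k' \cup S_k''$ then satisfy $\sum_k \delta_G(S_k) = \beta$, giving the integer decomposition property and hence normality.

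The main obstacle is precisely this gluing in the $2$-sum: one must arrange the pairing of $G_1$-cuts with $G_2$-cuts so that the shared edge is separated consistently across the two sides. The equality of the two separation counts is exactly what makes such a matching possible, and everything else reduces to the facet description of Proposition~\ref{barahona} together with the absence of new chords on either summand.
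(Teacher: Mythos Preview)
The paper does not prove Proposition~\ref{csum}; it is quoted from \cite{OhsCut} as background. So there is no in-paper proof to compare against, and your outline must be judged on its own. The overall architecture---minor-closedness for the forward direction, and gluing cut decompositions of the two summands along the shared clique for the converse---is sound and is, in fact, essentially how the result is established in \cite{OhsCut}.

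There is one real slip. You phrase the integer decomposition property relative to $\ZZ^m$: ``given $\beta \in r\cpoly \cap \ZZ^m$, write $\beta = \sum_{k} \delta_G(S_k)$.'' For cut polytopes this is \emph{false} as stated: already for $G = K_3$ one has $(1,1,1) \in 2\,\cpolytope(K_3) \cap \ZZ^3$, yet $(1,1,1)$ cannot be a sum of two cut vectors because the cycle-parity condition of Proposition~\ref{za} fails. Normality of $K[\cpoly]$ is equivalent to the IDP taken with respect to the sublattice ${\mathcal L}_\PP$, not $\ZZ^m$; this is exactly the ``normalized'' Ehrhart count the paper uses. The fix is routine but must be made explicit: start from $\beta \in r\cpoly \cap {\mathcal L}_\PP$, and then check that the restriction $\beta'$ lies in ${\mathcal L}_{\cpolytope(G_1)}$. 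The latter holds because every cycle of $G_1$ is a cycle of $G$, so the parity constraints of Proposition~\ref{za} for $G_1$ are inherited from those for $G$. With that correction your gluing argument (matching the cuts that separate $\{u,v\}$ on both sides and complementing as needed) goes through.

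A minor streamlining: you do not need the induced-cycle bookkeeping to show $\beta' \in r\,\cpolytope(G_1)$. The coordinate projection of $\cpoly$ onto $\RR^{E_1}$ is exactly $\cpolytope(G_1)$, since $\delta_G(S)$ restricted to $E_1$ equals $\delta_{G_1}(S\cap V_1)$ and every cut of $G_1$ lifts to a cut of $G$; hence $\beta \in r\cpoly$ immediately gives $\beta' \in r\,\cpolytope(G_1)$ without invoking Proposition~\ref{barahona} at all.
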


Recall that $\cpoly$ is said to be compressed
if all of whose reverse lexicographic triangulations are unimodular.
In general, a compressed polytope is normal.
See \cite[Chapter 8]{Stu} and \cite{Sul} for details.
We now introduce the characterization of 
compressed cut polytope (\cite{Sul} and \cite[Theorem 1.3]{StSu}).

\begin{Proposition}
\label{compressed}
Let $G$ be a graph.
Then, the following conditions are equivalent:
\begin{enumerate}
\item[{\rm (i)}]
$\cpoly$ is compressed;
\item[{\rm (ii)}]
$\cpoly$ has a reverse lexicographic unimodular triangulation;
\item[{\rm (iii)}]
$G$ has no $K_5$-minor and no induced cycle of length $\geq 5$.
\end{enumerate}
\end{Proposition}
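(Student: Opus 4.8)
The plan is to separate the general polytope theory from the graph-theoretic content: first reduce compressedness to a "level" condition on facets, and then match that condition against (iii) using the facet description of Proposition \ref{barahona}. The implication (i) $\Rightarrow$ (ii) is immediate, since every lattice polytope admits at least one reverse lexicographic triangulation. For the converse and the link to (iii), I would invoke Sullivant's theory of compressed polytopes (\cite{Sul}, together with \cite[Chapter 8]{Stu}): a lattice polytope is compressed, equivalently has a single reverse lexicographic unimodular triangulation, if and only if it is \emph{$2$-level}, i.e.\ for every facet $F$ all vertices lie either on the affine hull of $F$ or on the unique parallel hyperplane lattice-adjacent to it with respect to the lattice ${\mathcal L}_\PP$ spanned by the cut vectors. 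This yields (i) $\Leftrightarrow$ (ii) at once, and reduces the whole statement to showing that $\cpoly$ is $2$-level if and only if $G$ satisfies (iii).

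For (iii) $\Rightarrow$ $2$-level, assume $G$ has no $K_5$-minor and no induced cycle of length $\geq 5$. By Proposition \ref{barahona} the facets of $\cpoly$ are the box inequalities $0 \leq x_i \leq 1$ and, for each induced cycle $C$ and odd subset $F \subseteq C$, the inequality $\sum_{e_i \in F} x_i - \sum_{e_j \in C \setminus F} x_j \leq |F|-1$; under the hypothesis every such $C$ has length $3$ or $4$. The box facets are trivially $2$-level. For a cycle facet I would use the classical fact that a cut meets any cycle in an even number of edges: if $a$ and $b$ denote the numbers of cut edges of a vertex $\delta_G(S)$ lying in $F$ and in $C \setminus F$, then the functional on the left equals $a-b$, and $a+b$ is even, so $a-b$ is always even and moves in steps of $2$ along ${\mathcal L}_\PP$. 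Since $|C| \leq 4$ and $|F|$ is odd, a short case analysis on the parity of $|C|$ shows that $a-b$ attains \emph{only} the two lattice-consecutive even values $|F|-1$ and $|F|-3$; hence every facet is $2$-level and $\cpoly$ is compressed.

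For the converse I argue by contraposition. If $G$ has a $K_5$-minor, then $\cpoly$ is not normal by Proposition \ref{normalityK5}, and since a compressed polytope is normal it cannot be compressed. If $G$ has no $K_5$-minor but contains an induced cycle $C$ of length $\geq 5$, I would take $F = \{e_1\} \subseteq C$ with $|F| = 1$; by Proposition \ref{barahona} the associated inequality $x_1 - \sum_{e_j \in C \setminus F} x_j \leq 0$ is a genuine facet. The same parity computation now realizes, as restrictions of cuts of $G$ to $C$, all even edge-subsets of $C$, so the functional $a-b$ attains the three distinct values $0$, $-2$ and $-4$ (the minimum being $|F|-|C|$ or $|F|-|C|+1 \leq |F|-5$ according to the parity of $|C|$). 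Thus the vertices of $\cpoly$ meet at least three parallel translates of this facet hyperplane, so $\cpoly$ is not $2$-level and not compressed.

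The hard part will be the bookkeeping in the $2$-level verification, and in particular using the correct lattice ${\mathcal L}_\PP$: along it the relevant facet functionals advance in steps of $2$ rather than $1$ (because cuts meet cycles evenly), so that $|F|-1$ and $|F|-3$ are genuinely lattice-consecutive and the facet is $2$-level \emph{exactly} when $|C| \leq 4$. Getting this normalization right, and confirming that the extreme values of $a-b$ are actually attained at vertices via the realizability of every even edge-subset of $C$ as the restriction of some $\delta_G(S)$, is where the argument needs the most care; the remaining steps are either immediate or a direct appeal to Proposition \ref{barahona} and Sullivant's theorem.
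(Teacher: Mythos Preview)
The paper does not supply its own proof of this proposition; it is quoted as a known result with attribution to \cite{Sul} and \cite[Theorem~1.3]{StSu}. Your sketch is essentially a reconstruction of the argument in those references: invoke Sullivant's criterion that a lattice polytope is compressed if and only if it is $2$-level, and then test the $2$-level condition against the Barahona--Mahjoub facet list (Proposition~\ref{barahona}) using the parity fact that a cut meets every cycle in an even number of edges. Your case analysis is correct: for $|C|\le 4$ the cycle functional $a-b$ takes only the two values $|F|-1$ and $|F|-3$ on cut vectors, while for $|C|\ge 5$ the choice $F=\{e_1\}$ already yields the three values $0,-2,-4$, witnessed by even edge-subsets of $C$ realized as restrictions of cuts $\delta_G(S)$ with $S\subset V(C)$. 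The observation that these two levels are lattice-adjacent in ${\mathcal L}_\PP$ (because the functional steps by $2$ on ${\mathcal L}_\PP$) is exactly the right normalization.

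One point deserves care. The implication (ii) $\Rightarrow$ (i) --- that the existence of \emph{one} unimodular reverse lexicographic triangulation forces \emph{all} of them to be unimodular --- is not a general fact for lattice polytopes, and Sullivant's $2$-level theorem as usually stated gives only (i) $\Leftrightarrow$ ($2$-level). Your sentence ``a lattice polytope is compressed, equivalently has a single reverse lexicographic unimodular triangulation'' asserts more than that theorem provides. Either verify that this stronger form is really in \cite{Sul} or \cite[Chapter~8]{Stu}, or close the loop another way: for instance, (ii) implies normality, hence no $K_5$-minor by Proposition~\ref{normalityK5}; the remaining step, deducing ``no induced cycle of length $\ge 5$'' from (ii), is where \cite[Theorem~1.3]{StSu} is actually needed, and you should cite it explicitly for that direction rather than fold it into the general $2$-level statement.
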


By Propositions \ref{barahona} and \ref{compressed},
we have the following:

\begin{Proposition}
\label{facet}
Suppose that the cut polytope $\cpoly$ of a graph $G$ is compressed.
Then, $\cpoly $ is the solution set of the following linear inequalities:
$$
\begin{array}{cl}
0 \leq x_i \leq 1,
& \ (e_i \mbox{ does not belong to any triangle of } G),\\
x_i - x_j - x_k \leq 0,
& \ ( \{e_i,e_j,e_k\} \mbox{ is a triangle of } G),\\
x_i + x_j + x_k \leq 2,
& \ ( \{e_i,e_j,e_k\} \mbox{ is a triangle of } G),\\
0 \leq x_i + x_j + x_k -x_\ell \leq 2,
& \ 
\left(
\begin{array}{c}
\{e_i,e_j,e_k,e_\ell\} \mbox{ is an induced cycle}\\
\mbox{of } G \mbox{ of length } 4
\end{array}
\right).
\end{array}
$$
Moreover, each of them defines a facet of $\cpoly$.
\end{Proposition}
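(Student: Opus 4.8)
The plan is to derive Proposition \ref{facet} directly from Propositions \ref{barahona} and \ref{compressed} by specializing the Barahona--Mahjoub inequalities to the class of graphs for which $\cpoly$ is compressed. First I would invoke Proposition \ref{compressed}: since $\cpoly$ is assumed compressed, $G$ has no $K_5$-minor and no induced cycle of length $\geq 5$. The absence of a $K_5$-minor lets me apply Proposition \ref{barahona}, so that $\cpoly$ is cut out exactly by the inequalities $0 \leq x_i \leq 1$ (for edges $e_i$ lying in no triangle) together with
\[
\sum_{e_i \in F} x_i - \sum_{e_j \in C \setminus F} x_j \leq |F| - 1,
\]
ranging over all induced cycles $C$ of $G$ and odd subsets $F \subseteq C$; moreover Proposition \ref{barahona} already guarantees that each of these is a facet.

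The key reduction is that, under the compressed hypothesis, every induced cycle of $G$ has length $3$ or $4$, since cycles have length $\geq 3$ and induced cycles of length $\geq 5$ are excluded. Hence it suffices to enumerate the odd subsets $F$ in these two cases. For a triangle $C = \{e_i,e_j,e_k\}$ an odd subset has $|F| \in \{1,3\}$: the choice $|F| = 1$ gives, after relabeling, $x_i - x_j - x_k \leq 0$, while $|F| = 3$ gives $x_i + x_j + x_k \leq 2$. For an induced cycle of length $4$, $C = \{e_i,e_j,e_k,e_\ell\}$, again $|F| \in \{1,3\}$: taking $F = \{e_i,e_j,e_k\}$ with complement $\{e_\ell\}$ yields $x_i + x_j + x_k - x_\ell \leq 2$, and taking $F = \{e_\ell\}$ yields $x_\ell - x_i - x_j - x_k \leq 0$, that is $0 \leq x_i + x_j + x_k - x_\ell$. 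Collecting the upper and lower bounds over the four choices of the distinguished edge produces exactly the pair $0 \leq x_i + x_j + x_k - x_\ell \leq 2$ appearing in the statement, so the inequality systems of Propositions \ref{barahona} and \ref{facet} coincide.

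Finally, I would observe that the facet claim is inherited at no extra cost: the inequalities displayed in Proposition \ref{facet} are literally a rewriting of the Barahona--Mahjoub inequalities for the present class of graphs, and Proposition \ref{barahona} already asserts that each of the latter defines a facet. There is no genuine obstacle in this argument; the only point demanding care is the bookkeeping of the enumeration---verifying that ``odd subset'' forces $|F| \in \{1,3\}$ for cycles of length $3$ and $4$, and that every relabeling of the distinguished edge is accounted for---so that the two lists match term by term, with neither omissions nor spurious redundancies.
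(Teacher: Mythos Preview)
Your proposal is correct and matches the paper's approach exactly: the paper states Proposition~\ref{facet} as an immediate consequence of Propositions~\ref{barahona} and~\ref{compressed} without further elaboration, and your argument supplies precisely the routine specialization (induced cycles have length $3$ or $4$, odd subsets have size $1$ or $3$) that the paper leaves implicit.
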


On the other hand, in general, Gorensteiness is {\em not} preserved under
taking 
(i) the clique sums of graphs;
(ii) an edge contraction;
(iii) an edge deletion;
(iv) an induced subgraph.
The next example shows this fact.

\begin{Example}
Let $G_{m,n}$ be a 0-sum (i.e., glued at a vertex) of 
complete graphs $K_m$ and $K_n$.
By Proposition \ref{compressed},
$\cpolytope (K_2)$, $\cpolytope (K_3)$, $\cpolytope (G_{2,3})$
and $\cpolytope (G_{3,3})$ are compressed and hence normal.
Moreover,
\begin{enumerate}
\item[(a)]
Both $\cpolytope (K_2)$ and $\cpolytope (K_3)$ are Gorenstein (simplices).
However, $\cpolytope (G_{2,3})$ is not Gorenstein
since the $h$-vector 
$(1,3)$ of its toric ring is not symmetric.
\item[(b)]
The cut polytope $\cpolytope (G_{3,3})$ is Gorenstein
since the $h$-vector 
$(1,9,9,1)$ of its toric ring is symmetric.
However, $\cpolytope (G_{2,3})$ is not Gorenstein.
Note that $G_{2,3}$ is an induced subgraph of $G_{3,3}$.
In addition, $G_{2,3}$ is obtained by an edge contraction from $G_{3,3}$.
\end{enumerate}
\end{Example}

It is known \cite{NaPe} that, if $G$ is a tree, then $\cpoly$ is Gorenstein.
However, no other result seems to be known for Gorenstein cut polytopes.

\section{Gorenstein cut polytopes are compressed}

In this section, we show that
any Gorenstein cut polytope is compressed.
For  the cut polytope $\PP = \cpoly \subset \RR^m$ of $G$,
let $r \PP = \{r \alpha \ | \ \alpha \in \PP\}$ and
let ${\mathcal L}_\PP$ be a sublattice of $\ZZ^m$ spanned by the vertices of $\PP$.
The relative interior of $\PP$ is denoted by ${\rm int} (\PP)$.
The {\em codegree} of $\PP$ is 
$
{\rm codeg} (\PP) =
\min (k \in \NN \ | \ 
{\rm int} (k \PP) \cap {\mathcal L}_\PP \neq \emptyset)
$.
Then, the following is known (see, e.g., \cite{BrGu}):

\begin{Proposition}
\label{criterion}
Let $\PP = \cpoly$ be the cut polytope of a graph $G$
with codegree $d$.
If $\PP$ is Gorenstein, then 
${\rm int}( d \PP) \cap {\mathcal L}_\PP = \{{\bf v}\}$
and any vector 
${\bf w} \in {\rm int}( (d+r) \PP) \cap {\mathcal L}_\PP$
satisfies ${\bf w} - {\bf v} \in r \PP \cap {\mathcal L}_\PP$.
\end{Proposition}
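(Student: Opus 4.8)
The plan is to read off the assertion from the structure of the canonical module of $K[\PP]$, translating the Gorenstein property into the stated condition on interior lattice points. Since $\PP=\cpoly$ is Gorenstein it is in particular normal, so $K[\PP]$ is Cohen--Macaulay and equals its own normalization; hence the Danilov--Stanley description of the canonical module applies directly to $K[\PP]$. Concretely, I would realize the canonical module $\omega$ of $K[\PP]$ as the multigraded ideal whose degree-$r$ component has $K$-basis $\{\,{\bf x}^{\bf a}t^r \ | \ {\bf a}\in{\rm int}(r\PP)\cap{\mathcal L}_\PP\,\}$, so that $\dim_K\omega_r=|{\rm int}(r\PP)\cap{\mathcal L}_\PP|$. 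By the definition of codegree, $d$ is precisely the least $r$ with $\omega_r\neq 0$.

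Next I would invoke the standard criterion that, for the Cohen--Macaulay ring $K[\PP]$, being Gorenstein is equivalent to $\omega$ being a principal ideal (free of rank one). Because $\omega$ is multigraded with all homogeneous components at most one-dimensional, a principal generator can be taken to be a single monomial, and it must lie in the lowest nonzero degree $d$. Comparing multidegrees then shows that $\omega_d$ is one-dimensional, i.e.\ $|{\rm int}(d\PP)\cap{\mathcal L}_\PP|=1$. Writing ${\rm int}(d\PP)\cap{\mathcal L}_\PP=\{{\bf v}\}$ and taking ${\bf x}^{\bf v}t^d$ as the generator establishes the first assertion.

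For the second assertion I would use the equality $\omega=K[\PP]\cdot{\bf x}^{\bf v}t^d$. Given ${\bf w}\in{\rm int}((d+r)\PP)\cap{\mathcal L}_\PP$, the monomial ${\bf x}^{\bf w}t^{d+r}$ is a basis element of $\omega_{d+r}$, so principality forces it to be a $K[\PP]$-multiple of ${\bf x}^{\bf v}t^d$; by the multigrading the cofactor is again a single monomial, necessarily ${\bf x}^{{\bf w}-{\bf v}}t^r$, and it lies in $K[\PP]$. Finally, normality of $\PP$ identifies the degree-$r$ monomials of $K[\PP]$ with the lattice points ${\bf u}\in r\PP\cap{\mathcal L}_\PP$, so ${\bf w}-{\bf v}\in r\PP\cap{\mathcal L}_\PP$, as claimed; the case $r=0$ recovers the uniqueness of ${\bf v}$.

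There is no genuine combinatorial difficulty here: the statement is essentially a dictionary translation of a standard commutative-algebra fact, and everything needed is packaged in \cite{BrGu}. The only points requiring care are citing the Danilov--Stanley formula and the ``Gorenstein $\Leftrightarrow$ principal canonical module'' criterion in the normal setting, and then using the multigrading twice --- first to force the generator of $\omega$ to be a single monomial, and afterwards to force the cofactor in the second step to be an honest monomial rather than a mere $K$-linear combination. Assembling these ingredients in the order above yields the proposition.
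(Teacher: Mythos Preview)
The paper does not prove this proposition at all; it simply records it as a known fact with the reference ``see, e.g., \cite{BrGu}''. Your argument is correct and is exactly the standard one packaged in that reference: Danilov--Stanley gives the canonical module $\omega$ of the normal toric ring as the ideal of interior lattice points, Gorenstein is equivalent to $\omega$ being principal, and principality in the fine multigrading forces a monomial generator ${\bf x}^{\bf v}t^d$ and monomial cofactors, which is precisely the stated condition on interior lattice points.
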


So, the following proposition \cite[p. 258]{Lau} will play an important role.

\begin{Proposition}
\label{za}
Let $\PP = \cpoly$ be the cut polytope of a graph $G$.
Then, 
$(x_1,\ldots, x_m) \in {\mathcal L}_{\PP}$
if and only if 
$\sum_{e_i \in C} x_i \equiv 0 \mod 2$
for each cycle $C$ of $G$.
\end{Proposition}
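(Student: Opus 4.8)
The plan is to prove the two lattice inclusions separately. Write
\[
\Lambda = \{x \in \ZZ^m \ : \ \textstyle\sum_{e_i \in C} x_i \equiv 0 \bmod 2 \text{ for every cycle } C \text{ of } G\},
\]
so that the assertion is exactly $\mathcal L_\PP = \Lambda$.

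For the inclusion $\mathcal L_\PP \subseteq \Lambda$, I would first check that each generating vertex $\delta_G(S)$ lies in $\Lambda$: traversing a cycle $C = (\{v_1,v_2\},\ldots,\{v_q,v_1\})$ and summing the indicators $[\,|S \cap \{v_k,v_{k+1}\}| = 1\,]$ over $\mathbb F_2$ gives $\sum_k (\mathbf 1_S(v_k) + \mathbf 1_S(v_{k+1})) = 0$, so a cycle meets a cut in an even number of edges. Since each defining congruence of $\Lambda$ is $\ZZ$-linear read modulo $2$, the set $\Lambda$ is closed under addition and negation, hence is a sublattice of $\ZZ^m$; as it contains every vertex of $\PP$, it contains their $\ZZ$-span $\mathcal L_\PP$.

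The reverse inclusion $\Lambda \subseteq \mathcal L_\PP$ is the substantive part, and I would reduce it modulo $2$. The first key step is the identity
\[
\delta_G(\{u\}) + \delta_G(\{v\}) - \delta_G(\{u,v\}) = 2\,\eb_i
\]
for each edge $e_i = \{u,v\}$, where $\eb_i$ is the $i$-th unit vector; checking coordinatewise, every edge other than $e_i$ contributes $0$ while $e_i$ contributes $1+1-0 = 2$ (here one uses that $G$ has no multiple edges). This yields $2\ZZ^m \subseteq \mathcal L_\PP$. The second key step identifies the reductions modulo $2$: the image $\overline{\mathcal L}_\PP \subseteq \mathbb F_2^m$ is spanned by the classes $\overline{\delta_G(S)} = \sum_{v \in S}\overline{\delta_G(\{v\})}$, hence equals the $\mathbb F_2$-cut (cocycle) space of $G$, whereas $\overline\Lambda$ is precisely the orthogonal complement of the $\mathbb F_2$-cycle space. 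By the standard fact that the cut space and cycle space are orthogonal complements in $\mathbb F_2^m$, these two subspaces coincide.

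Combining the two steps finishes the argument: given $x \in \Lambda$, its class $\bar x \in \overline\Lambda = \overline{\mathcal L}_\PP$ is the reduction of some $y \in \mathcal L_\PP$, so $x - y \in 2\ZZ^m \subseteq \mathcal L_\PP$, whence $x = y + (x-y) \in \mathcal L_\PP$. The main obstacle I anticipate is the reverse inclusion, and within it the mod-$2$ identification: it rests on the orthogonality of the cut and cycle spaces over $\mathbb F_2$ together with the fact that the $\mathbb F_2$-cycle space is generated by the (simple) cycles appearing in the statement, so that orthogonality to all of them is equivalent to the stated congruences.
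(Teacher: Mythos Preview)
Your argument is correct. The paper does not supply its own proof of this proposition: it is quoted as a known fact from \cite[p.~258]{Lau}, so there is no proof in the paper to compare against. What you have written is precisely the standard justification --- cuts meet cycles evenly gives the easy inclusion, while the reverse inclusion follows from $2\ZZ^m \subseteq \mathcal L_\PP$ (via the identity $\delta_G(\{u\})+\delta_G(\{v\})-\delta_G(\{u,v\})=2\eb_i$) together with the orthogonal-complement duality between the cut space and the cycle space over $\mathbb F_2$. The one point you rightly flag, that the $\mathbb F_2$-cycle space is spanned by the characteristic vectors of simple cycles (so that the defining congruences of $\Lambda$ really cut out the full orthogonal complement), is standard and does not require elaboration.
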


Using these propositions, we have the following.

\begin{Theorem}
\label{GorCom}
If the cut polytope $\cpoly$ of a graph $G$ is Gorenstein,
then $\cpoly$ is compressed.
\end{Theorem}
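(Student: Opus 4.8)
The plan is to combine the Gorenstein structure furnished by Proposition \ref{criterion} with the facet description of Proposition \ref{barahona} and, crucially, with the lattice description of Proposition \ref{za}, and then to quote Proposition \ref{compressed} to conclude. Since $\PP = \cpoly$ is Gorenstein it is in particular normal, so by Proposition \ref{normalityK5} the graph $G$ has no $K_5$-minor. By Proposition \ref{compressed} it then suffices to show that $G$ has no induced cycle of length $\geq 5$, and this is the entire content of the argument. I will argue by contradiction: assume $C = (e_1,\ldots,e_q)$ is an induced cycle of $G$ with $q \geq 5$.

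Next I extract the equations defining the Gorenstein point. Let $d = {\rm codeg}(\PP)$ and let ${\bf v}$ be the unique lattice point with ${\rm int}(d\PP) \cap {\mathcal L}_\PP = \{{\bf v}\}$ from Proposition \ref{criterion}. The standard reformulation of the translation property in Proposition \ref{criterion} is that ${\bf v}$ lies at lattice distance exactly $1$ from every facet: writing a facet with primitive inner normal $\ab \in {\mathcal L}_\PP^\ast$ as $\langle \ab, \xb\rangle \leq b$, one has $\langle \ab, {\bf v}\rangle = d\,b - 1$. Because $G$ has no $K_5$-minor, Proposition \ref{barahona} ensures that for each edge $e_i$ of $C$ the singleton $F = \{e_i\}$ (an odd subset of $C$) gives the facet $x_i - \sum_{e_j \in C,\ j \neq i} x_j \leq 0$. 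The key point is to use the correct lattice. By Proposition \ref{za}, the functional
$$
\phi_i(\xb) \ =\ x_i - \sum_{e_j \in C,\ j \neq i} x_j \ =\ 2x_i - \sum_{e_j \in C} x_j
$$
is even on ${\mathcal L}_\PP$, since $\sum_{e_j \in C} x_j \equiv 0 \pmod 2$ for every $\xb \in {\mathcal L}_\PP$. Hence the primitive inner normal of this facet with respect to ${\mathcal L}_\PP$ is $\tfrac12\phi_i$ rather than $\phi_i$ (it takes the value $-1$ on the cut vector $\delta_G(\{v_1,v_2\})$, so it is indeed primitive), and the facet reads $\tfrac12\phi_i(\xb) \leq 0$. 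The distance-one relation therefore yields
$$
2 v_i - \sum_{e_j \in C} v_j \ =\ -2 \qquad (1 \leq i \leq q).
$$

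Finally I derive the contradiction. Subtracting the equation for index $i$ from that for index $j$ gives $v_i = v_j$, so all coordinates of ${\bf v}$ along $C$ share a common value $a$; substituting back gives $(2-q)a = -2$, i.e.\ $a = 2/(q-2)$. For $q \geq 5$ we have $q-2 \geq 3$, so $a$ is not an integer, contradicting ${\bf v} \in {\mathcal L}_\PP \subset \ZZ^m$. This excludes induced cycles of length $\geq 5$ and, together with the absence of a $K_5$-minor, Proposition \ref{compressed} shows $\cpoly$ is compressed. I expect the main obstacle to be the step identifying the primitive facet normal as $\tfrac12\phi_i$: it is precisely this factor of $2$, forced by Laurent's lattice description in Proposition \ref{za}, that distinguishes the admissible lengths $q = 3,4$ (where $2/(q-2)$ is an integer) from the forbidden lengths $q \geq 5$, so the whole proof hinges on computing ${\mathcal L}_\PP$, and not merely $\ZZ^m$, correctly.
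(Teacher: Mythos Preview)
Your argument is correct, and it is genuinely different from—and considerably shorter than—the paper's proof.

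The paper works directly with the translation property as stated in Proposition~\ref{criterion}: for a hypothetical long induced cycle it constructs explicit lattice vectors $\gamma \in {\mathcal L}_\PP \setminus 2\PP$ with ${\bf v}+\gamma \in {\rm int}((d+2)\PP)$, which forces a system of equalities on the coordinates of ${\bf v}$ along the cycle; solving these yields a non-integer value. This requires separate treatment of even cycles of length $\geq 6$ and odd cycles of length $\geq 5$, a preliminary bound $d\leq 4$, and in the odd case a further subdivision according to $d\in\{2,3\}$ together with an auxiliary triangle argument.

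You instead pass immediately to the equivalent reflexivity formulation—${\bf v}$ at lattice distance~$1$ from every facet of $d\PP$—and apply it only to the simplest cycle facets, those with $|F|=1$ and right-hand side $0$. The crucial step is recognising, via Proposition~\ref{za}, that the primitive normal in ${\mathcal L}_\PP^{*}$ is $\tfrac12\phi_i$ rather than $\phi_i$; this halves the lattice distance and produces the uniform equation $2v_i-\sum_{e_j\in C} v_j=-2$, whose unique solution $v_i=2/(q-2)$ is non-integral precisely for $q\geq 5$. Your approach handles even and odd cycles simultaneously, avoids all case analysis on $d$, and makes transparent why $q=3,4$ survive while $q\geq 5$ does not. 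The paper's approach, by contrast, stays closer to the literal statement of Proposition~\ref{criterion} and never names the reflexive translate, at the cost of a longer and more delicate construction. One small point: you invoke the lattice-distance-$1$ condition as a ``standard reformulation'' of Proposition~\ref{criterion}; this is correct and well known (it is the reflexivity of $d\PP-{\bf v}$, covered by the same reference \cite{BrGu}), but it is slightly more than what Proposition~\ref{criterion} states verbatim, so you may want to cite it separately.
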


\begin{proof}
Suppose that the cut polytope $\PP = \cpoly$ of a graph $G$ is Gorenstein
and not compressed.
Since $\PP$ is normal, $G$ has no $K_5$-minor by Proposition \ref{normalityK5}.
Thus, $G$ has an induced cycle of length $\geq 5$ by Proposition \ref{compressed}.
Let $d={\rm codeg}(\PP)$ and 
${\rm int}(d \PP) \cap {\mathcal L}_\PP = \{{\bf v}=(v_1,\ldots,v_m)\}$.
If $x_i = 2$ for all $1 \leq i \leq m$, then $0 < x_i < 4$ and 
$$
4(|F|-1) - 
\sum_{e_i \in F} x_i 
+ \sum_{e_j \in C \setminus F}x_j
=
2 (|C| - 2)
>0
$$
for any induced cycle $C$ and its odd subset $F$.
By Propositions
\ref{barahona} and \ref{za}, we have
$
(2,\ldots, 2) \in {\rm int} (4 \PP) \cap {\mathcal L}_\PP 
$.
Thus, $2 \leq d \leq 4$ and $v_i \leq 2$ for all $i=1,2,\ldots,m$
by Proposition \ref{criterion}.

\bigskip

\noindent
{\bf Case 1.}
There exists an induced even cycle $C$ of length $\geq 6$.

Let $C=(e_{i_1},\ldots,e_{i_{2\ell}})$ on the vertex set $\{p_1,\ldots, p_{2 \ell}\}$.
Suppose that 
$v_{i_1} + \cdots + v_{i_{2 \ell -1}} -v_{i_{2 \ell}} < d(2 \ell -2) -2$.
Let $\gamma = c_1 + \cdots +c_{2\ell} - 2 {\bf e}_{i_{2\ell}}$
where $c_j = \delta_G(\{ p_j \})$ for $1 \leq j \leq 2 \ell$.
By Proposition \ref{za}, $\gamma$ belongs to $ {\mathcal L}_\PP$.
Since $ \gamma_{i_1} + \cdots + \gamma_{i_{2\ell-1}} - \gamma_{i_{2\ell}}= 2(2\ell-1) > 2(2\ell-2)$,
it follows that $ \gamma \notin 2 \PP $.
We will show that $\beta =(\beta_1,\ldots,\beta_m)={\bf v} + \gamma$
belongs to ${\rm int}((d+2) \PP) \cap {\mathcal L}_\PP$.
Since $0 \leq \gamma_i \leq 2$ holds for each $1 \leq i \leq m$, 
we have $0< \beta_i <d+2$.
Let $C'$ be an induced cycle
and $F$ an odd subset $F$ of $C'$.
Set
\begin{eqnarray*}
\alpha & = &
 (d+2)(|F|-1) - \sum_{e_i \in F} \beta_i + \sum_{e_j \in C' \setminus F} \beta_j\\
&=&
d (|F|-1) - \sum_{e_i \in F} v_i + \sum_{e_j \in C' \setminus F} v_j
+
2(|F|-1) - \sum_{e_i \in F} \gamma_i + \sum_{e_j \in C' \setminus F} \gamma_j. \ \ 
\end{eqnarray*}
If $(C',F) \neq (C,\{e_{i_1},\ldots,e_{i_{2\ell-1}}\})$, then
we have 
$d (|F|-1) - \sum_{e_i \in F} v_i + \sum_{e_j \in C' \setminus F} v_j \geq 2$
and
$2(|F|-1) - \sum_{e_i \in F} \gamma_i + \sum_{e_j \in C' \setminus F} \gamma_j 
> 2(|F|-1) - 2 |F| = -2$.
Thus, $\alpha >0$.
If $(C',F) = (C,\{e_{i_1},\ldots,e_{i_{2\ell-1}}\})$, then
we have 
$d (|F|-1) - \sum_{e_i \in F} v_i + \sum_{e_j \in C' \setminus F} v_j \geq 4$
and
$2(|F|-1) - \sum_{e_i \in F} \gamma_i + \sum_{e_j \in C' \setminus F} \gamma_j 
= -2$.
Thus, $\alpha >0$.
Therefore, 
$\beta =(\beta_1,\ldots,\beta_m)={\bf v} + \gamma$
belongs to ${\rm int}((d+2) \PP) \cap {\mathcal L}_\PP$.
By Proposition \ref{criterion}, this contradicts that $\PP$ is Gorenstein.
Hence,
$v_{i_1} + \cdots + v_{i_{2 \ell -1}} -v_{i_{2 \ell}} = d(2 \ell -2) -2$.
By symmetry, we have
$- 2v_{i_{j}} + \sum_{k=1}^{2\ell} v_{i_{k}}= d(2 \ell -2) -2$
for all $j =1,2,\ldots,2\ell $.
Adding all of them, it follows that $(2\ell -2) \sum_{k=1}^{2\ell} v_{i_{k}}= 2 \ell (d (2 \ell -2) -2)$.
Thus, we have
$$
v_{i_{j}}  = 
\frac{\ell (d (\ell -1) -1)}{\ell -1} - (d(\ell -1) -1)
=
\frac{d (\ell -1) -1}{\ell -1}
=
d-
\frac{1}{\ell -1}.
$$
Since $\ell \geq 3$, this is not an integer, a contradiction.
(Note that, if $\ell =2$, then $v_{i_{j}}  = d-1$.)

\bigskip

\noindent
{\bf Case 2.}
There exists no induced even cycle of length $\geq 6$
and
there exists an induced odd cycle $C$ of length $\geq 5$.

Let $C=(e_{i_1},\ldots,e_{i_{2\ell+1}})$ on the vertex set $\{p_1,\ldots, p_{2 \ell+1}\}$.
Suppose that 
$v_{i_1} + \cdots + v_{i_{2 \ell+1}} < 2 \ell d-2$.
Let $\gamma = c_1 + \cdots +c_{2\ell+1} \in  {\mathcal L}_\PP$
where $c_j = \delta_G(\{ p_j \})$ for $1 \leq j \leq 2 \ell+1$.
Since $ \gamma_{i_1} + \cdots + \gamma_{i_{2\ell+1}}= 2(2\ell+1) > 2 \cdot 2\ell$,
it follows that $ \gamma \notin 2 \PP $.
We will show that $\beta =(\beta_1,\ldots,\beta_m)={\bf v} + \gamma$
belongs to ${\rm int}((d+2) \PP) \cap {\mathcal L}_\PP$.
Since $0 \leq \gamma_i \leq 2$ holds for each $1 \leq i \leq m$, 
we have $0< \beta_i <d+2$.
Let $C'$ be an induced cycle
and $F$ an odd subset $F$ of $C'$.
Set
\begin{eqnarray*}
\alpha & = &
 (d+2)(|F|-1) - \sum_{e_i \in F} \beta_i + \sum_{e_j \in C' \setminus F} \beta_j\\
&=&
d (|F|-1) - \sum_{e_i \in F} v_i + \sum_{e_j \in C' \setminus F} v_j
+
2(|F|-1) - \sum_{e_i \in F} \gamma_i + \sum_{e_j \in C' \setminus F} \gamma_j. \ \ 
\end{eqnarray*}
If $(C',F) \neq (C,C)$, then
we have 
$d (|F|-1) - \sum_{e_i \in F} v_i + \sum_{e_j \in C' \setminus F} v_j \geq 2$
and
$2(|F|-1) - \sum_{e_i \in F} \gamma_i + \sum_{e_j \in C' \setminus F} \gamma_j 
> 2(|F|-1) - 2 |F| = -2$.
Thus, $\alpha >0$.
If $(C',F) =(C,C)$, then
we have 
$d (|F|-1) - \sum_{e_i \in F} v_i + \sum_{e_j \in C' \setminus F} v_j \geq 4$
and
$2(|F|-1) - \sum_{e_i \in F} \gamma_i + \sum_{e_j \in C' \setminus F} \gamma_j 
= -2$.
Thus, $\alpha >0$.
Therefore, 
$\beta =(\beta_1,\ldots,\beta_m)={\bf v} + \gamma$
belongs to ${\rm int}((d+2) \PP) \cap {\mathcal L}_\PP$.
By Proposition \ref{criterion}, this contradicts that $\PP$ is Gorenstein.
Hence,
$v_{i_1} + \cdots +v_{i_{2 \ell+1}} = 2 \ell d -2$.

Since $v_i \leq 2$ for all $i$, we have
$2 \ell d -2 \leq 2 (2\ell +1)$ and hence
$d \leq 2+ 2/\ell \leq 3$.
If $d=2$, then $v_{i_1} + \cdots +v_{i_{2 \ell+1}} = 4 \ell -2$.
Since $\ell \geq 2$, we have $ 4 \ell -2 > 2 \ell +1$.
Hence, there exists $1 \leq k \leq 2 \ell +1$ such that $v_{i_k} = 2$.
By assumption, ${\bf v} \in {\rm int}(2 \PP)$.
If $e_{i_k}$ does not belong to any triangle of $G$,
then ${\bf v}$ belongs to the facet of $2\PP$ defined by $x_{i_k} = 2$, a contradiction.
Thus, there exists a triangle $(e_{i_k},e_{s}, e_{t})$ of $G$.
Since ${\bf v}$ belongs to ${\rm int}(2 \PP)$,
we have $2 + v_s + v_t < 4$ and $2 - v_s - v_t < 0$,
a contradiction.
Therefore, $d =3$.
Then, $3 \leq 2+ 2/\ell$ and hence $\ell = 2$.
In particular, $G$ has no induced odd cycle of length $\geq 7$.

Since  $v_{i_1} + \cdots +v_{i_5} = 10$, we have $v_{i_1}=v_{i_2}=v_{i_3}=v_{i_4}=v_{i_5}=2$.
Suppose that
there exists a triangle $(e_{i_1},e_{s}, e_{t})$ of $G$.
Since ${\bf v}$ belongs to ${\rm int}(3 \PP)$,
we have $2 + v_s + v_t < 6$ and $2 - v_s - v_t < 0$.
Thus, $2 < v_s + v_t < 4$ and hence $v_s + v_t =3$.
This contradicts that $2 + v_s + v_t$ is even.
Therefore, if an induced cycle contains $e_{i_1}$, 
then the length of the cycle is either 4 or 5.

Let $\gamma' = 2 e_{i_1} \in  {\mathcal L}_\PP \setminus 2\PP$.
We will show that $\beta' =(\beta_1',\ldots,\beta_m')={\bf v} + \gamma'$
belongs to ${\rm int}(5 \PP) \cap {\mathcal L}_\PP$.
Since $0 \leq \gamma_i \leq 2$ holds for each $1 \leq i \leq m$, 
we have $0< \beta_i' <5$.
Let $C'$ be an induced cycle
and $F$ an odd subset $F$ of $C'$.
Set
\begin{eqnarray*}
\alpha' & = &
 5 (|F|-1) - \sum_{e_i \in F} \beta_i' + \sum_{e_j \in C' \setminus F} \beta_j'\\
&=&
3 (|F|-1) - \sum_{e_i \in F} v_i + \sum_{e_j \in C' \setminus F} v_j
+
2(|F|-1) - \sum_{e_i \in F} \gamma_i' + \sum_{e_j \in C' \setminus F} \gamma_j'. \ \ 
\end{eqnarray*}
If $e_{i_1} \notin F$, then 
$2(|F|-1) - \sum_{e_i \in F} \gamma_i' + \sum_{e_j \in C' \setminus F} \gamma_j'
\geq 2(|F|-1) \geq 0$
and hence $\alpha' >0$.
If $|F|\geq 3$, then 
$2(|F|-1) - \sum_{e_i \in F} \gamma_i' + \sum_{e_j \in C' \setminus F} \gamma_j'
\geq 2(|F|-1) -2 \geq 2$
and hence $\alpha' >0$.
Suppose that $F = \{ e_{i_1} \}$.
Then, the length of $C'$ is 4 or 5.
If 
the length of $C'$ is 4, then $v_i =d-1= 2$ for all $i \in C'$
by the last argument in Case 1.
If the length of $C'$ is 5, then we have shown that
$v_i = 2$ for all $i \in C'$.
Thus, we have 
$\alpha' = -2 + 2(|C'|-1) -2= 2(|C'|-3) >0$.
Therefore, $\beta' =(\beta_1',\ldots,\beta_m')={\bf v} + \gamma'$
belongs to ${\rm int}(5 \PP) \cap {\mathcal L}_\PP$.
By Proposition \ref{criterion}, this contradicts that $\PP$ is Gorenstein.
\end{proof}


\begin{Remark}
Let $G$ be a graph.
Suppose that $\PP= \cpoly$ is normal.
By considering the vectors $(1,\ldots,1)$ and $(2,\ldots,2)$,
it is not difficult to see that
$$
{\rm codeg}(\PP)=
\left\{
\begin{array}{cc}
2 & \mbox{if } G \mbox{ is bipartite,}\\ 
4 & \mbox{if } G \mbox{ has a triangle,}\\
3 & \mbox{otherwise.}\\
\end{array}
\right.
$$
\end{Remark}

\section{Gorenstein cut polytopes}

In this section, in order to characterize Gorenstein cut polytopes,
we apply the theory of special simplices (given by Athanasiadis)
to compressed cut polytopes.
Let $\PP \subset \RR^m$ be a convex polytope.
A $d$-simplex $\Sigma$ each of whose vertices
is a vertex of $\PP$ is called a {\em special simplex} in $\PP$
if each facet of $\PP$
contains exactly $d$ of the vertices of $\Sigma$.
It is known (\cite{Ath, OHhvector}) that

\begin{Proposition}
\label{special}
Let $\PP$ be a compressed polytope. 
Then, $\PP$ is Gorenstein
if and only if 
there exists a special simplex in $\PP$.
Moreover, if $\PP$ is Gorenstein,
then the (symmetric) $h$-vector $(h_0,h_1,\ldots,h_s)$ of $K[\PP]$ 
satisfies $h_0 \leq h_1 \leq \cdots \leq h_{\lfloor s/2 \rfloor}$
(i.e., unimodal).
\end{Proposition}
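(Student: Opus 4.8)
The plan is to recast the Gorenstein property in terms of the homogenized cone and then read the special simplex directly off its canonical generator. I would work in $\RR^{m+1}$ with the cone $C=\RR_{\geq 0}\{(\mathbf{a},1):\mathbf{a}\in\PP\cap\ZZ^m\}$ and the lattice $\mathcal{L}={\mathcal L}_\PP\oplus\ZZ$ generated by the $(\mathbf{a},1)$; let $L_1,\ldots,L_n$ be the primitive (with respect to $\mathcal{L}$) facet functionals of $C$, normalized so that $L_i\geq 0$ on $C$ and $L_i$ restricts to the facet inequality $\ell_i\geq 0$ of $\PP$. I will use two standard facts. First, by the characterization of compressed polytopes as slices of a unit cube (see \cite{Sul}), every point of $\PP\cap\ZZ^m$ is a vertex and every vertex satisfies $\ell_i(\mathbf{v})\in\{0,1\}$ for each facet. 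Second (see \cite{BrGu}), a normal polytope $\PP$ is Gorenstein if and only if there is a lattice point $\mathbf{c}\in\mathrm{int}(C)\cap\mathcal{L}$ with $L_i(\mathbf{c})=1$ for every $i$; such a $\mathbf{c}$ is then unique because $C$ is pointed.

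For the implication ``special simplex $\Rightarrow$ Gorenstein'', let $\Sigma$ have vertices $\mathbf{v}_0,\ldots,\mathbf{v}_d$ and set $\mathbf{c}=\sum_{j=0}^{d}(\mathbf{v}_j,1)\in\mathcal{L}$. For a fixed facet exactly one vertex $\mathbf{v}_{\sigma(i)}$ fails to lie on it, so $\ell_i(\mathbf{v}_j)=0$ for $j\neq\sigma(i)$ and $\ell_i(\mathbf{v}_{\sigma(i)})=1$ by the distance-$0/1$ property; hence $L_i(\mathbf{c})=\sum_j\ell_i(\mathbf{v}_j)=1$ for all $i$, and the criterion gives Gorensteiness. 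For the converse I would start from the canonical generator $\mathbf{c}$ with all $L_i(\mathbf{c})=1$. Normality lets me write $\mathbf{c}=\sum_{j=0}^{d}(\mathbf{v}_j,1)$ with $\mathbf{v}_j\in\PP\cap\ZZ^m$ and $d+1$ the last coordinate of $\mathbf{c}$; since lattice points of a compressed polytope are vertices, each $\mathbf{v}_j$ is a vertex of $\PP$. For each facet $1=L_i(\mathbf{c})=\sum_j\ell_i(\mathbf{v}_j)$, a sum of nonnegative integers, forcing exactly one $\mathbf{v}_{\sigma(i)}$ off the facet and the rest on it, which is precisely the special-simplex incidence condition.

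It remains to check that $\mathbf{v}_0,\ldots,\mathbf{v}_d$ are affinely independent, so that $\Sigma=\conv\{\mathbf{v}_0,\ldots,\mathbf{v}_d\}$ is genuinely a $d$-simplex. Suppose $\sum_j\lambda_j(\mathbf{v}_j,1)=\mathbf{0}$; applying $L_i$ and using $\ell_i(\mathbf{v}_j)=\delta_{j,\sigma(i)}$ gives $\lambda_{\sigma(i)}=0$ for every facet $i$. Because a vertex cannot lie on all facets at once (their common intersection is empty for $\dim\PP\geq 1$), every index occurs as some $\sigma(i)$, whence all $\lambda_j=0$. Thus the $\mathbf{v}_j$ are affinely independent and $\Sigma$ is a special simplex, completing the equivalence; note that this bookkeeping also forces $d+1={\rm codeg}(\PP)$.

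For the final, unimodality statement I would pass to a reverse lexicographic (hence regular) unimodular triangulation $\mathcal{T}$ of $\PP$, which exists because $\PP$ is compressed. Then $K[\PP]$ degenerates to the Stanley--Reisner ring of $\mathcal{T}$, so the $h$-vector of $K[\PP]$ equals the $h$-vector of $\mathcal{T}$; since $\PP$ is Gorenstein, $\mathcal{T}$ is a Gorenstein${}^{*}$ complex, i.e. a simplicial homology sphere, whose $h$-vector is symmetric by the Dehn--Sommerville relations. The nondecreasing inequalities $h_0\leq h_1\leq\cdots\leq h_{\lfloor s/2\rfloor}$ are the genuinely hard input: they amount to the lower-half part of the $g$-theorem for spheres, which I would either quote directly or, following Athanasiadis, obtain by using the special simplex $\Sigma$ to realize $h$ as the $h$-vector of the boundary complex of a simplicial polytope and invoking the classical $g$-theorem. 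I expect this unimodality step to be the main obstacle, whereas the equivalence with the existence of a special simplex is a formal consequence of the cone/lattice bookkeeping above.
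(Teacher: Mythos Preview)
The paper does not prove this proposition; it is quoted as a known result from \cite{Ath, OHhvector}. So there is no ``paper's own proof'' to compare against, and I comment on your argument as a stand-alone proof.

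Your treatment of the equivalence is correct and is essentially the standard one. The two ingredients you invoke---that for a compressed polytope every lattice point is a vertex and every primitive facet functional takes only the values $0$ and $1$ on vertices, and that Gorensteinness of a normal $K[\PP]$ is equivalent to the existence of a lattice point $\mathbf{c}$ in the open cone with all primitive facet values equal to $1$---are exactly what is needed, and the bookkeeping with $L_i(\mathbf{c})=\sum_j\ell_i(\mathbf{v}_j)$ is clean. The affine-independence check is fine, and your remark that this forces $d+1={\rm codeg}(\PP)$ is correct.

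The only place to be careful is the unimodality paragraph. Your first sentence there is not right as stated: a reverse lexicographic triangulation $\mathcal{T}$ of $\PP$ triangulates a \emph{ball}, not a sphere, so $\mathcal{T}$ is not a Gorenstein${}^{*}$ complex and the Dehn--Sommerville relations do not apply to $\mathcal{T}$ itself. You immediately supply the correct route, though: following Athanasiadis (or, equivalently, Bruns--R\"omer), one uses the special simplex $\Sigma$ to identify the $h$-vector of $K[\PP]$ with the $h$-vector of the boundary complex of an honest simplicial polytope, and then invokes the $g$-theorem for polytopes. If you delete the misleading ``$\mathcal{T}$ is a simplicial homology sphere'' sentence and go straight to that argument, the outline is sound.
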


We discuss the existence of special simplices of 
compressed cut polytopes.

\begin{Lemma}
\label{line}
Suppose that $\cpoly$ is compressed and possesses
a special simplex $\Sigma$ of $\cpoly$.
If $G$ satisfies one of the conditions
\begin{enumerate}
\item[{\rm (a)}]
There exists an edge $e_i$ of $G$ such that
no triangle of $G$ contains $e_i$,
\item[{\rm (b)}]
$G$ has an induced cycle of length $4$,
\end{enumerate}
then we have $\dim \Sigma = 1$.
\end{Lemma}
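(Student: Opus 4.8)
The plan is to exploit the explicit facet description of the compressed cut polytope given in Proposition \ref{facet} together with the defining property of a special simplex: a $d$-simplex $\Sigma$ with vertices $v_0,\ldots,v_d$ is special precisely when every facet of $\cpoly$ contains all but exactly one of these $d+1$ vertices. Since each vertex of $\cpoly$ is a cut vector $\delta_G(S)$, hence a $0/1$ vector, I would look in each case for a pair of \emph{complementary} facets whose two supporting functionals sum to a constant and which, evaluated on the $0/1$ vertices of $\Sigma$, force a clean partition of the $v_j$ into two singletons.

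For condition (a), suppose $e_i$ lies in no triangle of $G$. By Proposition \ref{facet} both $x_i \geq 0$ and $x_i \leq 1$ define facets of $\cpoly$. The first step is to note that each $v_j$ has its $i$-th coordinate in $\{0,1\}$. Because the facet $\{x_i = 1\}$ contains exactly $d$ of the $d+1$ vertices, exactly one $v_j$ has $i$-th coordinate $0$; dually, because $\{x_i = 0\}$ contains exactly $d$ of them, exactly one $v_j$ has $i$-th coordinate $1$. These two counts exhaust all $d+1$ vertices, so $d+1 = 2$ and $\dim \Sigma = 1$.

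For condition (b), let $\{e_i,e_j,e_k,e_\ell\}$ be an induced $4$-cycle. By Proposition \ref{facet} both $x_i+x_j+x_k-x_\ell \geq 0$ and $x_i+x_j+x_k-x_\ell \leq 2$ are facets of $\cpoly$. The key computation, which I would record explicitly, is that on any cut vector the quantity $x_i+x_j+x_k-x_\ell = (x_i+x_j+x_k+x_\ell) - 2x_\ell$ takes only the values $0$ and $2$: the weight of a cut vector restricted to the four cycle-edges is even (namely $0$, $2$, or $4$, by parity of the cut around the cycle), and subtracting the even quantity $2x_\ell$ keeps the result in $\{0,2\}$. With this two-valuedness in hand the argument parallels case (a): the facet $\{x_i+x_j+x_k-x_\ell = 2\}$ forces exactly one vertex of $\Sigma$ to take value $0$, the facet $\{x_i+x_j+x_k-x_\ell = 0\}$ forces exactly one to take value $2$, and since every vertex takes one of these two values we again get $d+1 = 2$, whence $\dim \Sigma = 1$.

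The argument is short, so I do not anticipate a serious obstacle; the single point requiring care is the two-valuedness computation in case (b), where one must confirm that the functional attached to the $4$-cycle is genuinely $\{0,2\}$-valued on \emph{every} cut vector (not merely on those supported on the cycle), which is exactly the parity of the cut along the cycle. Once that is verified, the two complementary facets split the vertex set of $\Sigma$ into two nonempty singletons, and this is precisely what pins the dimension to $1$.
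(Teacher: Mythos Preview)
Your proof is correct and follows essentially the same approach as the paper: in each case you identify a pair of complementary facets (from Proposition~\ref{facet}) that partition the $0/1$ vertices of $\cpoly$, and then use the defining property of a special simplex to conclude that $\Sigma$ has exactly two vertices. Your explicit parity justification in case~(b) for why $x_i+x_j+x_k-x_\ell$ is $\{0,2\}$-valued on cut vectors is a detail the paper leaves implicit.
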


\begin{proof}
(a)
Suppose that there exists an edge $e_i$ of $G$ such that
no triangle of $G$ contains $e_i$.
By Proposition \ref{facet}, the inequalities $0 \leq x_i$ and $x_i \leq 1$
define the facets ${\mathcal F}$ and ${\mathcal F}'$, respectively, of $\cpoly$. 
Note that each vertex of $\cpoly$ belongs to exactly one of
${\mathcal F}$ and ${\mathcal F}'$.
Thus, the special simplex $\Sigma$ has exactly two vertices.

(b)
Suppose that $G$ has an induced cycle of length 4.
By Proposition \ref{facet}, the inequalities
$0 \leq x_i + x_j + x_k - x_\ell$ 
and
$x_i + x_j + x_k - x_\ell \leq 2$ 
define the facets ${\mathcal F}$ and ${\mathcal F}'$, respectively, of $\cpoly$. 
Since each vertex of $\cpoly$ belongs to exactly one of
${\mathcal F}$ and ${\mathcal F}'$,
the special simplex $\Sigma$ has exactly two vertices.
\end{proof}

\begin{Lemma}
\label{dim3}
Suppose that $\cpoly$ is compressed and possesses
a special simplex $\Sigma$ of $\cpoly$.
If $G$ has a triangle, then we have $\dim \Sigma = 3$.
\end{Lemma}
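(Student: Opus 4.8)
The plan is to mimic the proof of Lemma \ref{line}, replacing the single complementary pair of facets by the four facets attached to a triangle. Fix a triangle $\{e_i,e_j,e_k\}$ of $G$. Since $\cpoly$ is compressed, Proposition \ref{facet} guarantees that the four inequalities
$$
x_i + x_j + x_k \le 2,\qquad
x_i - x_j - x_k \le 0,\qquad
x_j - x_i - x_k \le 0,\qquad
x_k - x_i - x_j \le 0
$$
each define a facet of $\cpoly$; denote these facets by $\mathcal{F}_0,\mathcal{F}_1,\mathcal{F}_2,\mathcal{F}_3$.

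Second, I would determine, for each facet, exactly which vertices of $\cpoly$ it misses. The restriction $(x_i,x_j,x_k)$ of any vertex $\delta_G(S)$ to the triangle has even coordinate-sum (Proposition \ref{za}), so it equals one of $(0,0,0),(0,1,1),(1,0,1),(1,1,0)$, and each of these four values is actually attained by some $S$. A short case check then shows that each vertex strictly satisfies exactly one of the four inequalities above: the value $(0,0,0)$ lies off $\mathcal{F}_0$ only, while $(0,1,1),(1,0,1),(1,1,0)$ lie off $\mathcal{F}_1,\mathcal{F}_2,\mathcal{F}_3$ only, respectively. Consequently the vertex sets missed by $\mathcal{F}_0,\mathcal{F}_1,\mathcal{F}_2,\mathcal{F}_3$ form a partition of the vertices of $\cpoly$ into four nonempty blocks, one for each triangle-restriction value.

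Finally I would apply the special-simplex hypothesis. Writing $d = \dim \Sigma$, the simplex $\Sigma$ has $d+1$ vertices, and each facet contains exactly $d$ of them, hence misses exactly one. Applied to $\mathcal{F}_0,\ldots,\mathcal{F}_3$, each of these facets misses exactly one vertex of $\Sigma$; because the four missed-vertex blocks are pairwise disjoint, these are four distinct vertices of $\Sigma$, so $d+1 \ge 4$. Conversely, each vertex of $\Sigma$ lies in exactly one block, and no block can contain two vertices of $\Sigma$ (else the associated facet would miss at least two of them), so $d+1 \le 4$. Therefore $d+1 = 4$, that is, $\dim \Sigma = 3$.

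The argument reduces to bookkeeping once the four facets and their missed-vertex blocks are in hand; the step requiring genuine care is the case analysis showing that each of the four triangle restrictions lies off \emph{exactly one} of the four facets, together with the remark that all four blocks are nonempty. I expect this partition statement to be the crux: it is the fact that the blocks partition (rather than merely cover) the vertex set, and that none of them is empty, which pins the dimension down to exactly $3$ instead of yielding only an inequality.
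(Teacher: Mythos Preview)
Your proposal is correct and follows essentially the same approach as the paper: fix a triangle, take the four associated facets from Proposition~\ref{facet}, observe that the four possible triangle-restrictions $(0,0,0),(0,1,1),(1,0,1),(1,1,0)$ partition the vertex set so that each vertex lies off exactly one of these facets, and conclude that $\Sigma$ has exactly four vertices. The paper's proof is terser at the final step (it simply asserts ``Thus, the special simplex $\Sigma$ has exactly four vertices''), whereas you spell out both directions of the count; but the argument is the same.
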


\begin{proof}
Let $C=\{e_i,e_j,e_k\}$ be a triangle of $G$.
By Proposition \ref{facet},
there are 4 facets of $\cpoly$ arising from $C$:
$$
\begin{array}{c}
{\mathcal F}_1 : x_i - x_j - x_k \leq 0, \ \ \ \ 
{\mathcal F}_2 : x_j - x_i - x_k \leq 0, \\
{\mathcal F}_3 : x_k - x_i - x_j \leq 0, \ \ \ \ 
{\mathcal F}_4 : x_i + x_j + x_k \leq 2.
\end{array}
$$
Given a vertex $(x_1,\ldots, x_m) \in \{0,1\}^m$ of $\cpoly$,
we have that the vector $(x_i,x_j,x_k)$ is one of the following:
$(0,0,0)$, $(0,1,1)$, $(1,0,1)$ and $(0,1,1)$.
Moreover, it follows that
\begin{eqnarray*}
(x_1,\ldots, x_m) \notin {\mathcal F}_1 & \Leftrightarrow & (x_i,x_j,x_k) = (0,1,1),\\
(x_1,\ldots, x_m) \notin {\mathcal F}_2 & \Leftrightarrow & (x_i,x_j,x_k) = (1,0,1),\\
(x_1,\ldots, x_m) \notin {\mathcal F}_3 & \Leftrightarrow & (x_i,x_j,x_k) = (1,1,0),\\
(x_1,\ldots, x_m) \notin {\mathcal F}_4 & \Leftrightarrow & (x_i,x_j,x_k) = (0,0,0).
\end{eqnarray*}
Thus, the special simplex $\Sigma$ has exactly four vertices, as required.
\end{proof}

We now come to the main result of the present paper.

\begin{Theorem}
\label{main}
Let $G$ be a graph.
Then, the cut polytope $\cpoly$ of $G$ is Gorenstein
(i.e., $K[\cpoly]$ is normal and Gorenstein)
if and only if 
$G$ has no $K_5$-minor and satisfies one of the following:
\begin{enumerate}
\item[{\em (i)}]
$G$ is a bipartite graph without induced cycle of length $\geq 6$;
\item[{\em (ii)}]
$G$ is a bridgeless chordal graph.
\end{enumerate}
\end{Theorem}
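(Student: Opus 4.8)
The plan is to reduce the Gorenstein property to the existence of a special simplex and then to argue according to whether $G$ contains a triangle. By Theorem \ref{GorCom}, a Gorenstein $\cpoly$ is compressed, so I may freely invoke Propositions \ref{compressed} and \ref{facet}; and by Proposition \ref{special}, for a compressed $\cpoly$ being Gorenstein is equivalent to the existence of a special simplex $\Sigma$. Thus the whole theorem becomes the statement that the compressed cut polytopes admitting a special simplex are exactly those coming from the two listed families.

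For the ``only if'' direction I assume $\cpoly$ Gorenstein, hence compressed, so $G$ has no $K_5$-minor and no induced cycle of length $\geq 5$, and a special simplex $\Sigma$ exists. If $G$ has no triangle, then any shortest odd cycle would be induced of length $\geq 5$, contradicting compressedness; hence $G$ has no odd cycle and is bipartite, and compressedness already forbids induced cycles of length $\geq 6$, which is (i). If $G$ has a triangle, then $\dim \Sigma = 3$ by Lemma \ref{dim3}; were there an edge in no triangle or an induced $4$-cycle, Lemma \ref{line} would force $\dim \Sigma = 1$, a contradiction. Hence every edge lies in a triangle (so $G$ is bridgeless, by the criterion recalled in Section~1) and $G$ has no induced $4$-cycle, which together with ``no induced cycle $\geq 5$'' makes $G$ chordal; this is (ii).

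For the ``if'' direction I must exhibit a special simplex in each case, using that $\cpoly$ is compressed (Proposition \ref{compressed}) so that Proposition \ref{facet} describes all facets. In case (i) the graph is triangle-free, so the only facets are $0 \leq x_i \leq 1$ and the induced $4$-cycle inequalities $0 \leq x_i+x_j+x_k-x_\ell \leq 2$. I take $\Sigma$ to be the segment joining $\delta_G(\emptyset)=\mathbf{0}$ and $\delta_G(V_1)=(1,\ldots,1)$, where $V_1$ is one side of a bipartition, so that $(1,\ldots,1)$ is genuinely a cut vector; a direct check shows each facet contains exactly one endpoint, so $\Sigma$ is a special $1$-simplex and Proposition \ref{special} applies.

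The heart of the proof, and the step I expect to be the main obstacle, is the construction in case (ii). Here $G$ is bridgeless chordal with no $K_5$-minor, so every edge lies in a triangle and, since chordal graphs are perfect while the absence of a $K_5$-minor forces clique number $\leq 4$, $G$ admits a proper $4$-coloring $c \colon V(G) \to \{0,1,2,3\}$. I set $S_i = c^{-1}(i)$ and let $\Sigma = \conv\{\delta_G(S_0),\delta_G(S_1),\delta_G(S_2),\delta_G(S_3)\}$. By Proposition \ref{facet} every facet of $\cpoly$ comes from a triangle (no $0 \leq x_i \leq 1$ facets since each edge lies in a triangle, and no $4$-cycle facets since $G$ is chordal). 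On any triangle $T=\{x,y,z\}$ the three vertices get three distinct colors, say all but color $d$; then $S_d$ separates none of $x,y,z$ while each of the other three color classes isolates exactly one vertex of $T$, so the four cuts realize the four patterns $(0,0,0),(1,1,0),(1,0,1),(0,1,1)$ on $T$ bijectively. Comparing with the list of the four triangle facets in Lemma \ref{dim3}, each such facet is avoided by exactly one of the four cuts, hence contains exactly three of them. Restricting the four cuts to the coordinates of a single triangle already gives four affinely independent points, so $\dim \Sigma = 3$ and $\Sigma$ is a special simplex; Proposition \ref{special} then yields that $\cpoly$ is Gorenstein. The delicate points are the simultaneous ``one pattern per facet, on every triangle'' bookkeeping and the affine independence of the four color-class cuts, which must be checked even when some color class happens to be empty.
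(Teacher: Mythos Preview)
Your proof is correct and follows essentially the same route as the paper: reduce via Theorem~\ref{GorCom} and Proposition~\ref{special} to the existence of a special simplex, use Lemmas~\ref{line} and~\ref{dim3} for the forward direction, and in the converse take $\Sigma=\conv\{\mathbf{0},\mathbf{1}\}$ in the bipartite case and $\Sigma=\conv\{\delta_G(S_0),\ldots,\delta_G(S_3)\}$ for a proper $4$-coloring in the bridgeless chordal case. Your explicit remark that the four color-class cuts are affinely independent (visible already on the coordinates of a single triangle, and valid even if some $S_i=\emptyset$) is a point the paper leaves implicit.
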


\begin{proof}
($\Rightarrow $)
Suppose that $\cpoly$ is Gorenstein.
By Theorem \ref{GorCom}, $\cpoly$ is compressed.
Hence, $G$ has no $K_5$-minor and no induced cycle of length $\geq 5$.
Since $\cpoly$ is compressed and Gorenstein,
$\cpoly$ has a special simplex $\Sigma$ by Proposition \ref{special}.

If $G$ is bipartite, then $G$ has no induced cycle of length $\geq 6$. 
If $G$ is not bipartite, then $G$ has an odd cycle.
Since $G$ has no induced odd cycle of length $\geq 5$,
$G$ has a triangle.
By Lemma \ref{dim3}, we have $\dim \Sigma =3$. 
Since $\dim \Sigma \neq 1$, 
$G$ satisfies neither (a) nor (b) in Lemma \ref{line}.
Thus,
$G$ is chordal and, for each $e \in E(G)$,
there exists a triangle $C$ of $G$ such that $e \in C$.

($\Leftarrow $)
Suppose that $G$ has no $K_5$-minor and 
$G$ is either a bipartite graph without induced cycle of length $\geq 6$
or a bridgeless chordal graph.
By Proposition \ref{compressed}, $\cpoly$ is compressed
and hence normal.
Thus, it is sufficient to show that $\cpoly$ has a special simplex.

Suppose that $G$ is bipartite.
Let $V(G) = V_1 \cup V_2$ be a bipartition of $V(G)$
and let $E(G) = \{e_1,\ldots, e_m\}$.
We will show that the simplex $\Sigma = {\rm Conv} ({\bf 0},{\bf 1})$,
where ${\bf 0} = (0,\ldots,0)$ and ${\bf 1} = (1,\ldots,1)$ is a special simplex.
Note that, both $\delta_G (\emptyset) = {\bf 0}$ and
$\delta_G (V_1) = {\bf 1}$ are vertices of $\cpoly$.
By Proposition \ref{facet},
the facets of $\cpoly$ are defined by the following inequalities:
$$
\begin{array}{cc}
0 \leq x_i \leq 1, & (1 \leq i \leq m),\\
0 \leq x_i + x_j + x_k - x_\ell \leq 2, & (\{e_i,e_j,e_k,e_\ell\} \mbox{ is a cycle of } G).
\end{array}
$$
For each facet ${\mathcal F}$ of $\cpoly$,
exactly one of ${\bf 0}$ and ${\bf 1}$ belongs to ${\mathcal F}$.
Thus, $\Sigma$ is a special simplex and hence $\cpoly$ is Gorenstein.

On the other hand, suppose that $G$ is chordal and, for each $e \in E(G)$,
there exists a triangle $C$ of $G$ such that $e \in C$.
Since $G$ is chordal and has no $K_5$ as a subgraph,
$G$ is four-colorable.
(It is known \cite[Proposition 5.5.2]{Diestel} that
any chordal graph is ``perfect."
See also Proposition \ref{another}.)
Let $V(G) = V_1 \cup V_2 \cup V_3 \cup V_4$ be a four coloring of $G$.
We define the vertex ${\bf u}_i$
of $\cpoly$ by 
${\bf u}_i = \delta_G (V_i)$ for each $i = 1,2,3, 4$.
We will show that $\Sigma = {\rm Conv}({\bf u}_1, {\bf u}_2, {\bf u}_3, {\bf u}_4)$
is a special simplex of $\cpoly$.
By Proposition \ref{facet},
the facets of $\cpoly$ are defined by the following inequalities:
$$
\begin{array}{cc}
x_i - x_j - x_k \leq 0, & (\{e_i,e_j,e_k\} \mbox{ is a triangle of } G),\\
x_i + x_j + x_k \leq 2, & (\{e_i,e_j,e_k\} \mbox{ is a triangle of } G).
\end{array}
$$
Let $C=\{e_i,e_j,e_k\}$ be a triangle of $G$ where 
$e_{i} = \{s,t\}$,
$e_{j} = \{t,u\}$ and
$e_{k} = \{s,u\}$.
There are 4 facets arising from $C$:
$$
\begin{array}{c}
{\mathcal F}_1 : x_i - x_j - x_k \leq 0, \ \ \ \ 
{\mathcal F}_2 : x_j - x_i - x_k \leq 0, \\
{\mathcal F}_3 : x_k - x_i - x_j \leq 0, \ \ \ \ 
{\mathcal F}_4 : x_i + x_j + x_k \leq 2.
\end{array}
$$
Without loss of generality,
we may assume that $s \in V_2$, $t \in V_3$ and $u \in V_4$.
Then, we have
$$
\begin{array}{c}
{\bf u}_1, {\bf u}_2, {\bf u}_3 \in {\mathcal F}_1,
 {\bf u}_4 \notin {\mathcal F}_1,\ \ \ \ 
{\bf u}_1, {\bf u}_3, {\bf u}_4 \in {\mathcal F}_2,
 {\bf u}_2 \notin {\mathcal F}_2,\\
{\bf u}_1, {\bf u}_2, {\bf u}_4 \in {\mathcal F}_3,
 {\bf u}_3 \notin {\mathcal F}_3,\ \ \ \ 
{\bf u}_2, {\bf u}_3, {\bf u}_4 \in {\mathcal F}_4,
 {\bf u}_1 \notin {\mathcal F}_4.
\end{array}
$$
This discussion is independent of the choice of a triangle of $G$.
Thus, $\Sigma$ is a special simplex and hence $\cpoly$ is Gorenstein.
\end{proof}

We give another characterization of a non-bipartite graph $G$
such that $\cpoly$ is Gorenstein:

\begin{Proposition}
\label{another}
Let $G$ be a connected graph.
Then, $G$ is a bridgeless chordal graph without $K_5$-minor
if and only if $G$ is a $0$, $1$ and $2$-sum of $K_3$'s and $K_4$'s.
\end{Proposition}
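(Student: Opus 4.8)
The statement is purely graph-theoretic, and the bridge between its two sides is the clique-tree (junction-tree) decomposition of a chordal graph. The plan rests on three facts about a chordal graph $G$. First, since $G$ is chordal it is assembled from its maximal cliques by clique-sums: there is a tree $T$ whose nodes are the maximal cliques of $G$ such that, for every vertex $v$, the cliques containing $v$ induce a subtree (the running intersection property). Second, for a chordal graph the largest clique minor equals the clique number; in particular $G$ has no $K_5$-minor if and only if $\omega(G) \le 4$. One clean way to see this is that a chordal graph of clique number $\omega$ has treewidth $\omega - 1$, and a graph of treewidth $w$ admits no $K_{w+2}$-minor (treewidth is minor-monotone and $K_{w+2}$ has treewidth $w+1$); the reverse inequality is trivial. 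Third, as recalled in Section 1, a chordal graph is bridgeless exactly when every edge lies in a triangle, which is equivalent to every maximal clique having at least three vertices.

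For the direction ($\Rightarrow$), assume $G$ is connected, bridgeless, chordal, and $K_5$-minor-free. By the second and third facts each maximal clique of $G$ has three or four vertices, i.e.\ is a $K_3$ or a $K_4$. Fixing a clique tree $T$ rooted at an arbitrary node, I would reconstruct $G$ by starting from the root clique and successively attaching each child clique $C$ to the part already built. The running intersection property guarantees that the intersection of $C$ with everything assembled so far equals $C \cap C'$, where $C'$ is the parent of $C$: for any earlier clique $D$ the path from $C$ to $D$ passes through $C'$, so $C \cap D \subseteq C'$. This intersection is a clique, it is a proper subset of $C$ (distinct maximal cliques), hence has at most three vertices, and it is nonempty because $G$ is connected. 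Thus every attachment is a clique-sum along a clique of size $1$, $2$, or $3$, that is, a $0$-, $1$-, or $2$-sum, with every block a $K_3$ or a $K_4$.

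For the converse, I would induct on the number of summands. A single $K_3$ or $K_4$ is connected, complete (hence chordal), has every edge in a triangle (hence bridgeless), and has fewer than five vertices (hence no $K_5$-minor). For the inductive step write $G = G' \sharp H$, where $G'$ is a $0$-, $1$-, $2$-sum of $K_3$'s and $K_4$'s, $H \in \{K_3, K_4\}$, and the glueing is along a common clique $K$ with $1 \le |K| \le 3$. Connectivity is immediate since $|K| \ge 1$. Chordality persists: a cycle of length $\ge 4$ lying in one side already has a chord by induction, while one meeting both sides must cross $K$ twice, and those two crossing vertices are adjacent, giving a chord. Bridgelessness persists because every edge of $G$ lies in $G'$ or in $H$, where the triangle witnessing it survives. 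Finally, no clique of $G$ can use a private vertex of $G'$ and a private vertex of $H$ simultaneously (these are nonadjacent, being separated by $K$), so every clique lies in one side and $\omega(G) = \max(\omega(G'),\omega(H)) \le 4$; since $G$ is chordal, the second fact upgrades this to the absence of a $K_5$-minor.

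The delicate point is the second fact, that for a chordal graph the absence of a $K_5$-minor coincides with $\omega(G) \le 4$; this is exactly what lets me pass between the minor condition used throughout the paper and the clique-size condition that the clique-tree decomposition controls. I would either invoke the treewidth inequality above or give a short self-contained induction on a simplicial vertex $v$, using that $N(v)$ is a clique to show that any $K_5$-minor of $G$ can be pushed off $v$ into $G - v$ or else collapsed to a genuine $K_5$ subgraph. Everything else, namely the running intersection property and the stability of chordality, bridgelessness and clique number under clique-sums, is routine bookkeeping.
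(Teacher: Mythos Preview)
Your argument is correct and rests on the same structural fact as the paper's proof, namely that a chordal graph decomposes as a clique-sum of its maximal cliques. Where you differ is in how the $K_5$-minor condition is handled. The paper dispatches both directions by citation: the forward direction uses only that a $K_5$ subgraph is already a $K_5$-minor (so $i_j\le 4$), and the backward direction invokes Wagner's theorem to conclude that a $0$-, $1$-, $2$-sum of $K_3$'s and $K_4$'s has no $K_5$-minor. You instead prove and use the equivalence ``chordal and $K_5$-minor-free $\Leftrightarrow$ chordal and $\omega(G)\le 4$'' via treewidth, and in the converse you carry $\omega\le 4$ through the inductive clique-sum and then upgrade it back to the minor condition. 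Your route is more self-contained (no Wagner, no black-box Diestel citation) at the cost of some extra bookkeeping with the clique tree and the induction; the paper's route is shorter but leans on heavier external results. Both are perfectly sound and arrive at exactly the same decomposition.
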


\begin{proof}
It is known \cite[Proposition 5.5.1]{Diestel} that
a graph $G$ is chordal if and only if
$G$ is a clique sum of complete graphs.

($\Rightarrow $)
Suppose that $G$ is a bridgeless chordal graph without $K_5$-minor.
Since $G$ is chordal, $G$ is a clique sum of complete graphs $K_{i_1}, \ldots , K_{i_r}$
($2 \leq i_1, \ldots, i_r \in \ZZ$).
Since $G$ has no $K_5$-minor,  we have $i_j \leq 4$ for all $1\leq j \leq r$.
Suppose that $i_j = 2$ for some $j$.
Then, $K_{i_j} = K_2$ consists of one edge $e$.
By the assumption, $e$ is not a bridge and hence $e$ belongs to other complete graph.
Hence, $K_{i_j}$ is redundant in the decomposition of $G$.
Thus, $G$ is a $0$, $1$ and $2$-sum of $K_3$'s and $K_4$'s.

($\Leftarrow $)
Suppose that $G$ is a $0$, $1$ and $2$-sum of $K_3$'s and $K_4$'s.
By  \cite[Proposition 5.5.1]{Diestel}, $G$ is a chordal graph. 
Since both $K_3$ and $K_4$ have no bridge, so does $G$.
Wagner's famous theorem says that, $G$ has no $K_5$-minor.
\end{proof}

\section*{Acknowledgement}

The author is grateful to Professor Takayuki Hibi
and anonymous referees
for their helpful comments.
This research was supported by the JST CREST.

\end{document}